\documentclass[12pt,reqno]{amsart}
\usepackage[margin=1in]{geometry}
\usepackage{amsmath,amssymb,amsthm}
\usepackage{enumitem}
\usepackage{array}
\usepackage{xcolor}
\usepackage[colorlinks=true,linkcolor=blue,citecolor=blue,urlcolor=blue]{hyperref}

\theoremstyle{plain}
\newtheorem{theorem}{Theorem}[section]
\newtheorem{proposition}[theorem]{Proposition}
\newtheorem{lemma}[theorem]{Lemma}
\newtheorem{corollary}[theorem]{Corollary}
\theoremstyle{definition}
\newtheorem{definition}[theorem]{Definition}
\newtheorem{example}[theorem]{Example}

\theoremstyle{remark}
\newtheorem{remark}[theorem]{Remark}

\newcommand{\Ncount}{\mathcal N}
\newcommand{\Sfree}{\mathcal S}
\newcommand{\Tr}{\operatorname{Tr}_{L/K}}
\newcommand{\rad}{\operatorname{rad}}
\newcommand{\ord}{\operatorname{ord}}
\newcommand{\res}{\operatorname{res}}
\newcommand{\ind}{\mathbf 1}
\newcommand{\F}{\mathbb{F}}
\newcommand{\Z}{\mathbb{Z}}

\newcommand{\Ord}{\operatorname{Ord}}

\title{The trace distribution of $r$-primitive $k$-normal elements of finite fields}

\keywords{$r$-primitive element; $k$-normal element; trace; character}
\subjclass[2020]{12E20; 11T23}

\begin{document}

\author[A. Choudhary]{Aakash Choudhary} \address[A. Choudhary]{Department of Mathematics, Indian Institute of Science Education and Research, Pune 411008, Maharashtra,
India}\email{achoudhary1396@gmail.com}

\author[S. Pisolkar]{Supriya Pisolkar\,
}\address[S. Pisolkar]{Department of Mathematics, Indian Institute of Science Education and Research, Pune 411008, Maharashtra,
India}\email{supriya@iiserpune.ac.in}

\begin{abstract}

Let \(q=p^e\) be a prime power and \(n\geq1\). Let
\(L=\F_{q^n}\) and \(K=\F_q\). We study the trace distribution of
\(r\)-primitive \(k\)-normal elements of \(L\) over \(K\). We first
derive exact formulas for the number of \(k\)-normal elements of
\(L\) with a prescribed trace in \(K\). For \(r\mid q^n-1\),
\(0\leq k\le n-1\), and \(a\in K\), let \(\mathcal{N}_{r,k}(a)\) denote the number
of \(r\)-primitive \(k\)-normal elements of \(L\) with trace \(a\).
We establish a sufficient condition for \(\mathcal{N}_{r,k}\) to be constant
on \(K^\times\). We also identify a subgroup \(H_r\) of
\(K^\times\) and prove that \(\mathcal{N}_{r,k}\) is constant on each coset
of \(H_r\). Finally, we obtain a necessary and sufficient
character-sum condition for \(\mathcal{N}_{r,k}\) to be constant on
\(K^\times\).
\end{abstract}
\maketitle

\section{Introduction}

Let $q=p^e$ be a power of a prime $p$, let $n\geq1$, and let
$K=\F_q$ and $L=\F_{q^n}$. Thus $L$ is an extension of $K$ of degree
$n$, and $L^\times$ is a cyclic group of order $q^n-1$. Throughout the
paper we write
\[
N=q^n-1,\qquad M=\frac{q^n-1}{q-1},\qquad Q=\frac{x^n-1}{x-1}\in K[x],
\]
and we denote by
$\Tr(\alpha)=\alpha+\alpha^q+\alpha^{q^2}+\cdots+\alpha^{q^{n-1}}$
the trace of $\alpha\in L$ over $K$.

\begin{definition}\label{def:normal}
An element $\alpha\in L$ is called \emph{normal} over $K$ if the
conjugates $\alpha,\alpha^q,\dots,\alpha^{q^{n-1}}$ of $\alpha$ form a
basis of $L$ as a $K$-vector space. Such a basis is called a normal
basis of $L$ over $K$.
\end{definition}

\noindent Normal bases allow efficient implementation of finite field
arithmetic and are therefore widely used in computation. By
\cite[Theorem 2.39]{LN}, $\alpha$ is normal over $K$ if and only if the
polynomials
\[
m_\alpha(x)=\alpha x^{n-1}+\alpha^q x^{n-2}+\cdots+\alpha^{q^{n-2}}x+\alpha^{q^{n-1}}
\qquad\text{and}\qquad x^n-1
\]
are relatively prime in $L[x]$. Motivated by this criterion, Huczynska
et al.\ \cite{HMPT} introduced the following notion.

\begin{definition}[\cite{HMPT}]\label{def:knormal}
Let $0\le k\le n$. An element $\alpha\in L$ is called
\emph{$k$-normal} over $K$ if the greatest common divisor of
$m_\alpha(x)$ and $x^n-1$, computed in $L[x]$, has degree $k$.
\end{definition}

\noindent Thus the $0$-normal elements are exactly the
normal elements. Elements that are $k$-normal for small $k>0$ can reduce
the cost of multiplication in finite fields, and their existence has been
studied extensively (see \cite{mullin}, \cite{Reis}). A formula for the
number of $k$-normal elements of $L$ was obtained in
\cite[Theorem 3.5]{HMPT}.

\noindent Since $m_\alpha(1)=\Tr(\alpha)$, prescribing the trace of
$\alpha$ fixes the value at $x=1$ of the polynomial that defines
$k$-normality. It is therefore natural to ask how the $k$-normal
elements are distributed among the trace values in $K$. For $a\in K$,
let $\mathcal{N}_{k}(a)$ denote the number of $k$-normal elements
$\alpha\in L$ over $K$ with $\Tr(\alpha)=a$.

\begin{definition}\label{def:rprimitive}
An element of $L^\times$ is called \emph{primitive} if it
generates $L^\times$. For a divisor $r$ of $N$, an element
$\alpha\in L^\times$ is called \emph{$r$-primitive} if its
multiplicative order is $N/r$. An element $\alpha\in L$ is called
\emph{$r$-primitive $k$-normal} over $K$ if it is both $r$-primitive and
$k$-normal over $K$.
\end{definition}

\noindent The $1$-primitive elements are precisely
the primitive elements. Primitive elements are used in cryptographic
constructions based on the discrete logarithm \cite{diffie}, and for
small $r$ the $r$-primitive elements still have large multiplicative
order, so that in many applications they may replace primitive elements.
The existence of primitive elements with prescribed trace has long been
of interest; for instance, primitive elements of $\F_{2^m}$ with trace
$1$ over $\F_2$ are useful in coding theory (see \cite{er berle},
\cite[Ch.~4]{mac}).

The existence of $r$-primitive $k$-normal elements, including those with a prescribed
trace, has been studied in \cite{ACN,AN,c4,RSTP}. For $a\in K$, let
\[
\mathcal{N}_{r,k}(a)
=
\#\bigl\{
\alpha\in L:\
\alpha\text{ is $r$-primitive and $k$-normal over $K$},\
\Tr(\alpha)=a
\bigr\}.
\]
The papers cited give sufficient conditions under which such elements exist. In this paper we ask a different question: how
does $\mathcal{N}_{r,k}(a)$ vary with $a$? A result in this direction is
due to Tuxanidy and Wang \cite[Theorem 1.10]{TW}: the number of $u$-free
elements of $L$ (see Definition \ref{ufree}) with trace $a$ is the same
for all $a\in K^\times$ whenever $\rad(u)$ divides $M$.

\noindent A normal element cannot have trace zero (Lemma \ref{lem:trQ}), so
$\mathcal{N}_{r,0}(0)=0$; more generally, $\mathcal{N}_{r,k}(0)$ behaves
differently from $\mathcal{N}_{r,k}(a)$ for $a\neq0$. We therefore
concentrate on non-zero traces and leave the study of
$\mathcal{N}_{r,k}(0)$ to future work. We say that $\mathcal{N}_{r,k}$
is \emph{uniform} if it is constant on $K^\times$, and we study the
following question.

\medskip
\noindent\textbf{Question.}
\emph{For which $q$, $n$, $r$ and $k$ is $\mathcal{N}_{r,k}$
uniform, and what can be said about $\mathcal{N}_{r,k}$ when it is not?}
\medskip

We now state the main results of this paper. They are proved in
Sections \ref{sec3}--\ref{sec6}. Our first result concerns $k$-normal
elements alone and gives exact formulas for $\mathcal{N}_{k}(a)$. For a
monic polynomial $f\in K[x]$, define the polynomial Euler function
\begin{align}{\label{122}}
\Phi_q(f)=\left|\left(K[x]/\langle f\rangle\right)^\times\right|,
\end{align}
the order of the group of units of $K[x]/\langle f\rangle$, with the
convention $\Phi_q(1)=1$. In all sums over polynomials below, the
divisors are monic, and a sum is taken to be $0$ if no divisor satisfies
the conditions under the summation sign.

\begin{theorem}
\label{thm:knormal}
Let $0\leq k\leq n-1$ and $Q=(x^n-1)/(x-1)$.
\begin{enumerate}[label=\textup{(\roman*)},leftmargin=*]

\item The number of $k$-normal elements having trace zero is
\[
\mathcal{N}_{k}(0)= \sum_{\substack{h\mid Q\\ \deg h=n-k}}
\Phi_q(h).
\]

\item Let $\nu =v_p(n)$, that is,
$p^\nu\mid n$ and $p^{\nu+1}\nmid n$, and let
$
P_0=\dfrac{x^n-1}{(x-1)^{p^\nu}}.
$
Then $\mathcal{N}_{k}(a)$ is independent of $a \in K^\times$, and for
every $a\in K^\times$, \[
\mathcal{N}_{k}(a)
=
q^{p^\nu-1}
\sum_{\substack{h\mid P_0\\
\deg h=n-k-p^\nu}}
\Phi_q(h).
\]
In particular, $\mathcal{N}_{k}(a)=0$ if $n-k-p^\nu<0$.
\end{enumerate}
\end{theorem}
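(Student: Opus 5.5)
We will use the standard $K[x]$-module structure on $L$: a polynomial $f=\sum c_ix^i$ acts by $f\circ\alpha=\sum c_i\alpha^{q^i}$. The annihilator of $\alpha$ is generated by a monic divisor $\mathrm{Ord}(\alpha)$ of $x^n-1$. By the criterion of \cite{HMPT}, $\alpha$ is $k$-normal exactly when $\deg\mathrm{Ord}(\alpha)=n-k$. Moreover, $L\cong K[x]/\langle x^n-1\rangle$ as $K[x]$-modules, via a normal element $\beta$ and $f\mapsto f\circ\beta$. Under this isomorphism, the number of $\alpha$ with $\mathrm{Ord}(\alpha)=h$ equals $\Phi_q(h)$ for each $h\mid x^n-1$. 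The trace is $\mathrm{Tr}(\alpha)=Q\circ\alpha$, since $Q=1+x+\dots+x^{n-1}$.

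\emph{Part (i).} We have $\mathrm{Tr}(\alpha)=0$ iff $Q\circ\alpha=0$, iff $\mathrm{Ord}(\alpha)\mid Q$. Summing $\Phi_q(h)$ over monic $h\mid Q$ with $\deg h=n-k$ gives the formula immediately.

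\emph{Part (ii).} Write $x^n-1=(x-1)^{p^\nu}P_0$ with $\gcd(x-1,P_0)=1$, and note $Q=(x-1)^{p^\nu-1}P_0$. By the Chinese remainder theorem,
\[
L\cong L_1\oplus L_2,\qquad L_1=\ker\bigl((x-1)^{p^\nu}\bigr),\quad L_2=\ker(P_0),
\]
with $L_1\cong K[x]/\langle (x-1)^{p^\nu}\rangle$ and $L_2\cong K[x]/\langle P_0\rangle$. Write $\alpha=\alpha_1+\alpha_2$ accordingly. Then $\mathrm{Ord}(\alpha)=\mathrm{Ord}(\alpha_1)\mathrm{Ord}(\alpha_2)$, and
\[
Q\circ\alpha=Q\circ\alpha_1,
\]
because $P_0\mid Q$ kills $\alpha_2$.

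On $L_1$, $Q$ acts as $(x-1)^{p^\nu-1}u$ with $u=P_0$ invertible mod $(x-1)^{p^\nu}$. Hence $Q\circ\alpha_1\neq0$ iff $\mathrm{Ord}(\alpha_1)=(x-1)^{p^\nu}$. So a nonzero trace forces $\alpha_1$ to have full order $(x-1)^{p^\nu}$, and then $\deg\mathrm{Ord}(\alpha_2)=n-k-p^\nu$. The number of choices of $\alpha_2$ is therefore $\sum_{h\mid P_0,\ \deg h=n-k-p^\nu}\Phi_q(h)$, which is $0$ if $n-k-p^\nu<0$. This count does not depend on $a$.

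It remains to show that for each $a\in K^\times$, exactly $q^{p^\nu-1}$ elements $\alpha_1\in L_1$ satisfy $Q\circ\alpha_1=a$. The map $\alpha_1\mapsto Q\circ\alpha_1$ is $K$-linear from $L_1$, which has dimension $p^\nu$, into $K$. Its kernel is $\ker\bigl((x-1)^{p^\nu-1}\bigr)\cap L_1$, of dimension $p^\nu-1$. So the image is all of $K$, each fibre has exactly $q^{p^\nu-1}$ elements, and every $\alpha_1$ with nonzero trace automatically has full order, as shown above. Multiplying the two counts gives the formula, independent of $a\in K^\times$.

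The only delicate step is identifying the image of $Q$ on $L_1$ and confirming it lies in $K$. Since $L_1\cong K[x]/\langle (x-1)^{p^\nu}\rangle$, the image $(x-1)^{p^\nu-1}L_1$ is one-dimensional. It is contained in $\ker(x-1)=K$, which is itself one-dimensional, so the image equals $K$. Everything else is routine module bookkeeping.
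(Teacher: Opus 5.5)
Your proof is correct. Part (i) matches the paper's argument, but for part (ii) you take a genuinely different route.

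The paper never decomposes $L$. It first gets independence of $a\in K^\times$ from the scaling bijection $\alpha\mapsto\lambda\alpha$ (Lemma \ref{lem:knormstable}). It then writes the total number of $k$-normal elements as $\mathcal{N}_k(0)+(q-1)\mathcal{N}_k(a)$ and subtracts the divisor sum of part (i) from that of Lemma \ref{lem:ordercount}. Writing divisors as $(x-1)^ih_0$ with $h_0\mid P_0$, all terms with $i\le p^\nu-1$ cancel. The surviving term $i=p^\nu$ is evaluated using multiplicativity of $\Phi_q$ and $\Phi_q\bigl((x-1)^{p^\nu}\bigr)=q^{p^\nu-1}(q-1)$, and one divides by $q-1$.

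You instead split $L=\ker\bigl((x-1)^{p^\nu}\bigr)\oplus\ker(P_0)$ and count the trace-$a$ elements directly. The trace depends only on the $L_1$-component, a nonzero trace forces that component to have order $(x-1)^{p^\nu}$, and $q^{p^\nu-1}$ appears as the fibre size of the surjective functional $Q|_{L_1}\colon L_1\to K$. Your route has three advantages:
\begin{itemize}
\item independence of $a$ falls out of the formula for free;
\item you need neither the total count of $k$-normal elements nor the value of $\Phi_q\bigl((x-1)^{p^\nu}\bigr)$, which you in effect recover;
\item you get an explicit description of the counted set: a coset of the hyperplane $\ker(Q|_{L_1})$ in $L_1$, times the elements of $L_2$ whose order has degree $n-k-p^\nu$.
\end{itemize}
The paper's route is shorter given the counting lemmas it has already quoted. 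Its independence step is also the same $K^\times$-scaling mechanism that the paper later refines into Theorems \ref{thm:general-multipliers} and \ref{thm:mult}.
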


Part (ii) shows that $\mathcal{N}_{k}$ is always constant on
$K^\times$; when $n\mid q-1$ the formulas become closed expressions in
$q$, $n$ and $k$ (Corollary \ref{cor1}). By contrast,
$\mathcal{N}_{r,k}$ need not be constant on $K^\times$, and the
next result describe when it is. For $r\mid N$, define
\[
D_r:=r\rad(N/r),
\]
where $\rad(u)$ denotes the product of the distinct prime divisors
of a positive integer $u$, with $\rad(1)=1$. Our second result gives a
sufficient condition for uniformity.

\begin{theorem}\label{thm:uniform}
Let $r\mid N$ and let $0\le k\le n-1$. If $D_r \mid M$, then
$\mathcal{N}_{r,k}$ is constant on $K^\times$.
\end{theorem}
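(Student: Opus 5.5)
The plan is to show that multiplication by a fixed scalar $c\in K^\times$ gives a bijection from the set counted by $\mathcal{N}_{r,k}(a)$ to the set counted by $\mathcal{N}_{r,k}(ca)$. Since $K^\times$ acts transitively on itself by multiplication, this shows that $\mathcal{N}_{r,k}$ is constant on $K^\times$. The map $\alpha\mapsto c\alpha$ is a bijection of $L$ with inverse $\alpha\mapsto c^{-1}\alpha$. So it suffices to check three things: it multiplies the trace by $c$, it preserves $k$-normality, and, under the hypothesis $D_r\mid M$, it preserves $r$-primitivity.

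The first two checks are immediate. The trace is $K$-linear, so $\Tr(c\alpha)=c\Tr(\alpha)$. Since $c^{q^i}=c$ for all $i$, we have $m_{c\alpha}(x)=c\,m_\alpha(x)$. As $c$ is a unit, $\gcd(m_{c\alpha}(x),x^n-1)=\gcd(m_\alpha(x),x^n-1)$, so $c\alpha$ is $k$-normal exactly when $\alpha$ is.

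The main step is $r$-primitivity, and this is where the hypothesis enters. Fix a generator $g$ of $L^\times$. Then $K^\times=\langle g^M\rangle$, so we may write $c=g^{Mt}$ for some integer $t$. The $r$-primitive elements are exactly the elements $g^{rm}$ with $\gcd(m,N/r)=1$. The condition $D_r=r\,\rad(N/r)\mid M$ gives first $r\mid M$, and second $\rad(N/r)\mid M/r$. If $\alpha=g^{rm}$, then
\[
c\alpha=g^{rm+Mt}=g^{r\left(m+(M/r)t\right)}.
\]
Let $\ell$ be any prime dividing $N/r$. Then $\ell\mid M/r$ while $\ell\nmid m$, so $\ell\nmid m+(M/r)t$. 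Hence $\gcd\bigl(m+(M/r)t,\,N/r\bigr)=1$, and $c\alpha$ again has order $N/r$. The same argument applied to $c^{-1}$ gives the reverse implication.

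Combining the three checks, $\alpha\mapsto c\alpha$ maps the $r$-primitive $k$-normal elements of trace $a$ bijectively onto those of trace $ca$. Therefore $\mathcal{N}_{r,k}(a)=\mathcal{N}_{r,k}(ca)$ for all $a,c\in K^\times$, which proves the theorem. I do not expect a real obstacle. The only point needing care is to use both consequences of $D_r\mid M$, namely $r\mid M$ and $\rad(N/r)\mid M/r$, in the exponent computation above.
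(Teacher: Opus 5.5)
Your argument is correct, but it takes a different route from the one the paper uses for this theorem.

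\textbf{The paper's proof.} In Section~\ref{sec:uniform-character} the paper deduces Theorem~\ref{thm:uniform} from Theorem~\ref{prop:Rr-stable-uniform} by a character-sum computation. It detects the trace with the additive character $\tau$ and expands the indicator $\Gamma_{R,r}$ in multiplicative characters. It then observes that only characters of order dividing $r\rad(R)$ have nonzero coefficients. The hypothesis $r\rad(R)\mid M$ makes all of these trivial on $K^\times$. Hence $S_k(y)$ does not depend on $y\in K^\times$, and $\mathcal{N}_{R,r,k}(a)=\frac1q\bigl(S_k(0)-S_k(1)\bigr)$.

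\textbf{Your proof.} You work directly with exponents. You show that when $D_r\mid M$, every $c=g^{Mt}\in K^\times$ permutes the $r$-primitive elements. The map $\alpha\mapsto c\alpha$ then carries the elements of trace $a$ onto those of trace $ca$. This is the ``if'' direction of Lemma~\ref{lem:multipliers} combined with Theorem~\ref{thm:general-multipliers}, specialised to $R=N/r$ and $H_r=K^\times$. The paper itself notes that Theorem~\ref{thm:uniform} is recovered this way as the case $H_r=K^\times$ of Theorem~\ref{thm:mult}.

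\textbf{Comparison.} Your route is more elementary and gives an explicit bijection between the sets of fixed trace. It also extends, with no hypothesis at all, to constancy on the cosets of $H_r$. The character-sum route gives a formula for the common value. It also isolates the mechanism, namely the restriction to $K^\times$ of characters of order dividing $D_r$. The paper later turns that mechanism into the exact uniformity criterion of Theorem~\ref{thm:uniformity} and Proposition~\ref{prop:restrictions}.
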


Theorem \ref{thm:uniform} is proved in Section
\ref{sec:uniform-character} as the case $R=N/r$ of a more general
statement (Theorem \ref{prop:Rr-stable-uniform}) about $(R,r)$-free
$k$-normal elements, which contains \cite[Theorem 1.10]{TW} as a special
case (Remark \ref{rem:TW}). The common value of $\mathcal{N}_{r,k}$ on
$K^\times$ can be zero. Remark \ref{456} gives a sufficient
condition for $\mathcal{N}_{r,k}(a)>0$; when it holds, every $a\in K^\times$ occurs as the trace of an
$r$-primitive $k$-normal element of $L$.

The condition $D_r\mid M$ is not necessary for uniformity
(Example \ref{ex:converse-uniformity}). The next result gives a
necessary condition, together with a symmetry of $\mathcal{N}_{r,k}$
that holds without any hypothesis.

\begin{proposition}{\label{prp}}
Let $r\mid N$ and $0\le k\le n-1$.
\begin{enumerate}[label=\textup{(\roman*)},leftmargin=*]
\item $\mathcal{N}_{r,k}(a^p)=\mathcal{N}_{r,k}(a)$ for every $a\in K$;
that is, $\mathcal{N}_{r,k}$ is constant on the orbits of the Frobenius
map $a\mapsto a^p$ of $K$.
\item Suppose that $\mathcal{N}_{r,k}$ is constant on $K^\times$, and
let the constant value be $c_{r,k}$. Let $d$ be the multiplicative
order of $p \pmod{\frac{N}{r}}$, with \(d=1\) if \(N/r=1\). Then $d
\mid c_{r,k}$.
\end{enumerate}
\end{proposition}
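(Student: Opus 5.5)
For part (i), the plan is to use the absolute Frobenius $\sigma\colon L\to L$, $\alpha\mapsto\alpha^p$. It is a field automorphism of $L$ that maps $K$ onto itself, so it is a bijection of $L$. I will check three things.

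First, $\sigma$ preserves $r$-primitivity. Since $p\nmid N$, raising to the $p$-th power is an automorphism of the cyclic group $L^\times$, so $\alpha$ and $\alpha^p$ have the same multiplicative order.

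Second, $\sigma$ maps the trace fibre over $a$ onto the fibre over $a^p$. Indeed $\Tr(\alpha^p)=\Tr(\alpha)^p$, because $\sigma$ commutes with the $q$-power map.

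Third, $\sigma$ preserves $k$-normality. Extend $\sigma$ coefficientwise to a ring automorphism $\tilde\sigma$ of $L[x]$ that fixes $x$. Then $m_{\alpha^p}(x)=\tilde\sigma(m_\alpha(x))$, since the coefficients of $m_{\alpha^p}$ are the $p$-th powers $\alpha^{q^i p}=(\alpha^{q^i})^p$. Also $\tilde\sigma(x^n-1)=x^n-1$, because its coefficients lie in $\F_p$. A ring automorphism of $L[x]$ preserves degrees and divisibility, hence it preserves gcd's up to units. Therefore $\gcd(m_{\alpha^p},x^n-1)=\tilde\sigma\bigl(\gcd(m_\alpha,x^n-1)\bigr)$ has the same degree, and $\alpha^p$ is $k$-normal exactly when $\alpha$ is.

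Combining the three checks, $\sigma$ restricts to a bijection from the set counted by $\mathcal N_{r,k}(a)$ onto the set counted by $\mathcal N_{r,k}(a^p)$, which proves (i).

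For part (ii), I evaluate at $a=1\in K^\times$, so that $c_{r,k}=\mathcal N_{r,k}(1)$. Let $S$ be the set of $r$-primitive $k$-normal elements with trace $1$. Since $1^p=1$, the argument for (i) shows that the cyclic group $\langle\sigma\rangle$ acts on $S$. Hence $c_{r,k}=|S|$ is a sum of orbit sizes, and it suffices to show that every orbit has size divisible by $d$.

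The orbit of $\alpha$ under $\langle\sigma\rangle$ consists of its Galois conjugates over $\F_p$, so it has size $m=[\F_p(\alpha):\F_p]$. Since $\alpha\in\F_{p^m}^\times$ has order $N/r$, we get $N/r\mid p^m-1$, that is $p^m\equiv1\pmod{N/r}$, and hence $d\mid m$. This also covers $N/r=1$, where $d=1$. Therefore $d$ divides every orbit size, and so $d\mid c_{r,k}$.

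I do not expect a real obstacle. The only point that needs care is the third check in (i), that $k$-normality is Frobenius-invariant. It rests on the facts that $\tilde\sigma$ fixes $x^n-1$ and that the gcd is taken in $L[x]$, which $\tilde\sigma$ maps onto itself.
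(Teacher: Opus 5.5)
Your proof is correct and follows the same route as the paper: the Frobenius $\alpha\mapsto\alpha^p$ preserves $r$-primitivity and $k$-normality and sends trace $a$ to trace $a^p$, and on the trace-$1$ set every orbit has length divisible by $d$ (in fact equal to $d$). The only local difference is that you check the invariance of $k$-normality directly from the gcd definition, by extending $\sigma$ coefficientwise to $L[x]$, whereas the paper uses $\dim_K U_{\alpha^p}=\dim_K U_\alpha$ together with Lemma~\ref{prop:knormorder}.
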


Our third result describes what is true without any hypothesis on
$r$: $\mathcal{N}_{r,k}$ is always constant on the cosets of an
explicit subgroup of $K^\times$.

\begin{theorem}\label{thm:mult}
Fix a generator \(g\) of \(L^\times\), so that
$
K^\times=\langle g^M\rangle
$. Let \(r\mid N\), \(0\leq k\leq n-1\), and let
$$
H_r=
\bigl\{
g^{jM}:j\in\mathbb Z,\ D_r\mid jM
\bigr\}.
$$
Then $H_r$ is a subgroup of $K^\times$, and for every \(a\in K\) and
every \(\lambda\in H_r\),
\[
\mathcal{N}_{r,k}(\lambda a)=\mathcal{N}_{r,k}(a).
\]
In particular, \(\mathcal{N}_{r,k}\) is constant on every coset of
\(H_r\) in $K^\times$.
\end{theorem}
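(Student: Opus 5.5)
The plan is to build an explicit bijection. For $\lambda\in H_r$, multiplication $\alpha\mapsto\lambda\alpha$ should map the set of $r$-primitive $k$-normal elements with trace $a$ onto the set of those with trace $\lambda a$. Its inverse is multiplication by $\lambda^{-1}$, which will also lie in $H_r$ once $H_r$ is shown to be a subgroup, so it suffices to show that multiplication by $\lambda$ preserves all three properties.

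First, $H_r$ is a well-defined subgroup. Since $K^\times=\langle g^M\rangle$ has order $q-1$, the element $g^{jM}$ determines $j$ modulo $q-1$, that is, it determines $jM$ modulo $N$. Because $D_r=r\,\mathrm{rad}(N/r)$ divides $N$, the condition $D_r\mid jM$ depends only on $jM \bmod N$, so membership in $H_r$ is well defined. The set of integers $j$ with $D_r\mid jM$ is closed under addition and negation and contains $0$. Hence $H_r$ is a subgroup of $K^\times$.

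Next, trace and $k$-normality are both compatible with scaling by any $\lambda\in K^\times$. Since $\lambda^{q^i}=\lambda$ for all $i$, we get $m_{\lambda\alpha}(x)=\lambda\, m_\alpha(x)$. Thus $\gcd(m_{\lambda\alpha},x^n-1)=\gcd(m_\alpha,x^n-1)$, and $\lambda\alpha$ is $k$-normal exactly when $\alpha$ is. By $K$-linearity of the trace, $\mathrm{Tr}_{L/K}(\lambda\alpha)=\lambda\,\mathrm{Tr}_{L/K}(\alpha)$. Both facts hold for every $\lambda\in K^\times$; only $r$-primitivity uses the hypothesis $\lambda\in H_r$.

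The one substantive step is that multiplication by $\lambda\in H_r$ preserves $r$-primitivity. Write $\alpha=g^t$. Then $\alpha$ is $r$-primitive exactly when $\gcd(t,N)=r$. Write $t=rt'$ with $\gcd(t',N/r)=1$, and, using $r\mid D_r\mid jM$, write $jM=rs$ with $\mathrm{rad}(N/r)\mid s$. Then $\lambda\alpha=g^{r(t'+s)}$. For every prime $\ell\mid N/r$ we have $\ell\mid s$ and $\ell\nmid t'$, so $\ell\nmid t'+s$. Hence $\gcd(t'+s,N/r)=1$, which gives $\gcd(r(t'+s),N)=r$, and $\lambda\alpha$ is $r$-primitive. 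This step is the reason for the factor $\mathrm{rad}(N/r)$ in $D_r$, and I expect it to be the main point of the proof, although it is elementary.

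Combining the three properties, multiplication by $\lambda$ is a bijection between the two sets, so $\mathcal N_{r,k}(\lambda a)=\mathcal N_{r,k}(a)$ for every $a\in K$. Constancy on cosets of $H_r$ follows immediately.
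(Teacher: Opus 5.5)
Your proof is correct and is essentially the paper's argument. The paper also uses the bijection $\alpha\mapsto\lambda\alpha$ with inverse $\alpha\mapsto\lambda^{-1}\alpha$, relies on scalar invariance of $k$-normality and $K$-linearity of the trace, and checks preservation of $r$-primitivity by the same exponent computation: this is the ``if'' direction of Lemma~\ref{lem:multipliers}, proved there for general $(R,r)$-free elements, with $R=N/r$ as the special case. The only difference is cosmetic: you verify $k$-normality directly from $m_{\lambda\alpha}=\lambda\, m_\alpha$, whereas the paper goes through $\Ord(\lambda\alpha)=\Ord(\alpha)$.
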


The subgroup $H_r$ is explicit: if $t=D_r/\gcd(D_r,M)$, then
$t\mid q-1$, $H_r=\langle g^{tM}\rangle$ and $[K^\times:H_r]=t$
(Proposition \ref{prop:index}); for $r=1$, $t$ is the product of the
primes dividing $q-1$ but not $n$ (Corollary \ref{cor:primitive}). Since
$g^M$ generates $K^\times$, we have $H_r=K^\times$ if and only if
$D_r\mid M$, so Theorem \ref{thm:uniform} is the special case
$H_r=K^\times$ of Theorem \ref{thm:mult}. Theorem \ref{thm:mult} is
proved in Section \ref{sec5}, again for $(R,r)$-free $k$-normal elements
(Theorem \ref{thm:general-multipliers}), after determining exactly which
scalars $\lambda\in L^\times$ permute the set of $(R,r)$-free elements
(Lemma \ref{lem:multipliers}).

Theorem \ref{thm:mult} does not say whether the constant values of
$\mathcal{N}_{r,k}$ on different cosets of $H_r$ agree. Our final result
is an exact criterion. Expanding $\mathcal{N}_{r,k}$ in the
multiplicative characters $\xi$ of $K^\times$, we attach to each
nontrivial $\xi$ a character sum $A_{r,k}(\xi)$, built from the
multiplicative characters of $L^\times$ whose restriction to $K^\times$
is $\overline{\xi}$ (see \eqref{eq:A}), and we prove the following.

\begin{theorem}[see Theorem \ref{thm:uniformity} and Proposition
\ref{prop:restrictions}]\label{thm:intro-criterion}
Let $r\mid N$ and $0\le k\le n-1$. Then $\mathcal{N}_{r,k}$ is
constant on $K^\times$ if and only if $A_{r,k}(\xi)=0$ for every
nontrivial character $\xi$ of $K^\times$. Moreover, $A_{r,k}(\xi)=0$
whenever $\xi$ is nontrivial on $H_r$; hence only the characters of
$K^\times$ that are trivial on $H_r$ need to be tested.
\end{theorem}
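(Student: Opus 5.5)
We work with the indicator of the trace condition via multiplicative characters of $K^\times$. For $a\in K^\times$ and a nontrivial character $\xi$ of $K^\times$, orthogonality gives
\[
\mathcal{N}_{r,k}(a)=\frac{1}{q-1}\sum_{\xi}\overline{\xi}(a)\,S(\xi),\qquad
S(\xi)=\sum_{\substack{\alpha\ r\text{-prim.},\ k\text{-normal}\\ \Tr(\alpha)\neq 0}}\xi\bigl(\Tr(\alpha)\bigr).
\]
Thus $\mathcal{N}_{r,k}$ is constant on $K^\times$ iff $S(\xi)=0$ for all $\xi\neq 1$, by linear independence of characters. The quantity $A_{r,k}(\xi)$ of \eqref{eq:A} is, up to a nonzero normalising factor, $S(\xi)$ rewritten so that it involves characters $\chi$ of $L^\times$ with $\chi|_{K^\times}=\overline{\xi}$. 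The first part of the theorem (Theorem \ref{thm:uniformity}) therefore reduces to verifying this identification: $S(\xi)$ equals a nonzero constant times $A_{r,k}(\xi)$.

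To establish the identification, I would expand the $r$-primitivity indicator. Write $\alpha=\beta^r$ type conditions as a sum over characters of $L^\times$ of order dividing $N/r$ weighted by Möbius-type coefficients, using the standard $(R,r)$-free indicator. For each pair $(\chi,\xi)$, I would use the substitution $\alpha\mapsto c\alpha$ with $c\in K^\times$. This substitution preserves $k$-normality, because $m_{c\alpha}=c\,m_\alpha$, and it multiplies the trace by $c$. Averaging over $c$ forces $\chi|_{K^\times}\cdot\xi$ to be trivial, since otherwise the term vanishes. Only characters $\chi$ restricting to $\overline{\xi}$ survive, and this yields the form \eqref{eq:A}.

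For the second part (Proposition \ref{prop:restrictions}), I would use Theorem \ref{thm:mult}. Take $\lambda\in H_r$. Then
\[
S(\xi)=\sum_{a\neq0}\xi(a)\mathcal{N}_{r,k}(a)=\sum_a\xi(\lambda a)\mathcal{N}_{r,k}(\lambda a)=\xi(\lambda)S(\xi).
\]
If $\xi$ is nontrivial on $H_r$, choose $\lambda$ with $\xi(\lambda)\neq1$, which gives $S(\xi)=0$ and hence $A_{r,k}(\xi)=0$. Consequently only characters trivial on $H_r$, that is, of order dividing $t=[K^\times:H_r]$, need testing.

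The main obstacle is the bookkeeping in the identification of $S(\xi)$ with $A_{r,k}(\xi)$, specifically pinning down the exact normalising constant and checking that it is nonzero. The restriction statement itself is immediate once Theorem \ref{thm:mult} is available.
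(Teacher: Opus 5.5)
Your reduction to the Fourier coefficients $S(\xi)$ is the paper's own (its $T_{r,k}(\xi)$). Your substitution $\alpha\mapsto c\alpha$, $c\in K^\times$, correctly shows that a term vanishes unless $\chi|_{K^\times}=\overline\xi$. But that does not yet ``yield the form \eqref{eq:A}''. What survives is
\[
S(\xi)=\frac{\theta(N/r)}{r}\sum_{l\mid N}\frac{\mu(l_{(r)})}{\varphi(l_{(r)})}\sum_{\substack{\chi_l\\ \chi_l|_{K^\times}=\overline\xi}}U(\chi_l,\xi),\qquad U(\chi,\xi)=\sum_{\alpha\in L^\times}\ind_k(\alpha)\chi(\alpha)\,\xi\bigl(\Tr(\alpha)\bigr),
\]
whereas \eqref{eq:A} has $G_k(\chi_l)$ in place of $U(\chi_l,\xi)$, and $G_k$ carries the additive character $\tau(\Tr(\gamma))$, not $\xi(\Tr(\gamma))$.

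\textbf{The gap.} The missing step is the Gauss sum, which is substance rather than bookkeeping. For $\chi|_{K^\times}=\overline\xi$, apply your own averaging trick to $G_k(\chi)$ and then evaluate the inner sum:
\[
G_k(\chi)=\frac{1}{q-1}\sum_{\gamma\in L^\times}\ind_k(\gamma)\chi(\gamma)\sum_{c\in K^\times}\overline\xi(c)\,\tau\bigl(c\Tr(\gamma)\bigr)=\frac{\Delta_K(\overline\xi)}{q-1}\,U(\chi,\xi).
\]
The second equality uses $\sum_{c}\overline\xi(c)\tau(cw)=\xi(w)\Delta_K(\overline\xi)$, which holds also at $w=0$ because $\xi\neq\xi_0$. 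This is equivalent to the expansion in Lemma \ref{lem:twist}. The constant depends only on $\xi$, not on $\chi_l$, so it factors out and gives $A_{r,k}(\xi)=\frac{\Delta_K(\overline\xi)}{q-1}S(\xi)$. The ``if'' direction then still needs $\Delta_K(\overline\xi)\neq0$, which follows from $|\Delta_K(\overline\xi)|^2=q$. Your proposal asserts the existence of a nonzero normalising factor but supplies neither the mechanism nor the nonvanishing, so the first part is only an assertion as written.

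\textbf{The ``moreover'' part.} Your argument is correct once that identity is in hand. It only uses $S(\xi)=0\Rightarrow A_{r,k}(\xi)=0$, so nonvanishing of $\Delta_K$ is not needed there. It also differs from the paper's route.
\begin{itemize}
\item You derive the vanishing from the coset invariance in Theorem \ref{thm:mult}. This shows that the vanishing is the Fourier-side shadow of the symmetry $\Ncount_{r,k}(\lambda a)=\Ncount_{r,k}(a)$, but it only gives vanishing of the total sum.
\item The paper argues structurally. The characters with nonzero coefficient in \eqref{eq:rp-expansion} are exactly those of order dividing $D_r$. Any such $\chi$ satisfies $\chi(g^{jM})=\chi(g)^{jM}=1$ whenever $D_r\mid jM$. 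So none of them restricts to $\overline\xi$ when $\xi$ is nontrivial on $H_r$, and every term of $A_{r,k}(\xi)$ has zero coefficient. This gives term-by-term vanishing and needs neither Theorem \ref{thm:mult} nor the Gauss-sum identity. Combined with the count $|\res(X_r)|=t$, it also shows that some coefficient survives for $\xi\in H_r^\perp\setminus\{\xi_0\}$.
\end{itemize}

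One small correction: the $r$-primitivity expansion involves characters of order dividing $D_r=r\rad(N/r)$, not of order dividing $N/r$.
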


If $D_r\mid M$, then $H_r=K^\times$ and no nontrivial character
of $K^\times$ is trivial on $H_r$, which recovers Theorem
\ref{thm:uniform}. In the remaining cases, uniformity is reduced to the
evaluation of finitely many Gauss-type sums. The examples in Sections
\ref{sec:uniform-character} and \ref{sec5} show that, when $D_r\nmid M$,
uniformity may hold for one value of $k$ and fail for another, with $q$,
$n$ and $r$ fixed.

The paper is organized as follows. Section \ref{sec:prelim} collects the notation and the known results
that we use. Section \ref{sec3} proves Theorem \ref{thm:knormal} and
derives Corollary \ref{cor1}. Section \ref{sec:uniform-character}
proves Theorems \ref{prop:Rr-stable-uniform} and \ref{thm:uniform} and
Proposition \ref{prp}. Section \ref{sec5} proves Theorem \ref{thm:mult}
in the general form of Theorem \ref{thm:general-multipliers} and
computes the index of $H_r$. Section \ref{sec6} establishes the
character-sum criterion, Theorem \ref{thm:uniformity}, and relates it to
$H_r$. 

\section{Preliminaries}\label{sec:prelim}

Throughout, $N=q^n-1$, $M=(q^n-1)/(q-1)$ and $Q=(x^n-1)/(x-1)$,
as in the introduction. In this section we fix notation and collect the
known results that are used in the proofs. For background on
characters, primitive elements and finite fields we refer the reader to
\cite{LN}.

\begin{definition}
Let $G$ be a finite abelian group. A character of $G$ is a
homomorphism $\chi\colon G\to S^1$, where $S^1=\{z\in\mathbb C:|z|=1\}$.
The trivial character of $G$, denoted by $\chi_0$, is defined by
$\chi_0(g)=1$ for all $g\in G$. The order of a character $\chi$ is the
least positive integer $d$ such that $\chi^d=\chi_0$. The set of all
characters of $G$, denoted by $\widehat{G}$, is a group under pointwise
multiplication, and $|\widehat{G}|=|G|$.
\end{definition}

\noindent We shall  use the orthogonality relation
\begin{align}{\label{ortho}}
\sum_{g\in G}\chi(g)=
\begin{cases}
|G|, & \text{if } \chi=\chi_0,\\
0, & \text{if } \chi\neq\chi_0.
\end{cases}
\end{align}

\begin{definition}{\label{ufree}}
Let $G$ be a finite group, and let $u$ be a
positive integer that divides $|G|$. An element $\alpha\in G$ is called $u$-free  if $\alpha = \gamma^{d}  $ with $ \gamma \in G$ and $d\mid u$ implies $d=1$. In particular, if $G$ is cyclic,
then an element $\alpha\in G$ is $|G|$-free if and only if
$\alpha$ generates $G$.
\end{definition}

\begin{definition}[{\cite[Definition 8]{CKR}}]{\label{definition rr}}
 For a divisor $r$ of $N$ and a divisor $R$ of $\frac{N}{r}$, let $Z_r$ be the unique multiplicative cyclic subgroup of $L^{\times}$ of order $\frac{N}{r}$. An element $ \alpha \in L $ is  $(R,r)$-free   if $\alpha \in Z_r$ and $\alpha $ is $R$-free in $Z_r$. Since $Z_r$ consists of the elements of $L^\times$ whose order divides $N/r$, an element of $L^\times$ is $r$-primitive if and only if it is $(\frac{N}{r}, r)$-free.
\end{definition}

\noindent Let $\mu$ be the M\"{o}bius function, $\varphi$ Euler's
function, and $\theta(R)=\varphi(R)/R$. For a positive integer $l$, let
$l_{(r)}=l/\gcd(l,r)$. The characteristic function for the set of $(R,r)$-free elements of $L$, denoted by $\Gamma_{R,r}$, is given by (see \cite{CKR})
 \begin{align}{\label{Rr_free}}
    \Gamma_{R,r}(\alpha)=\dfrac{\theta(R)}{r} \sum_{l|Rr}\dfrac{\mu(l_{(r)})}{\varphi(l_{(r)})} \sum_{\chi_l}\chi_l(\alpha) =\begin{cases}
      1 & \text{if} \; \alpha \text{ is }(R,r) \text{-free} ; \\
      0 & \text{otherwise,}
     \end{cases}
\end{align}
where the sum $\sum_{\chi_l}\chi_l$ runs over all the multiplicative characters $\chi_l$ of $L^{\times}$ of order $l$.

The additive group of $L$ is a $K[x]$-module under the operation $f \bullet \alpha = \sum_{i=0}^{t} a_i \alpha^{q^i}$, for $\alpha \in L$ and $f(x) =  \sum_{i=0}^{t}a_i x^i \in K[x]$. Since $\alpha^{q^n}=\alpha$ for all $\alpha\in L$, the polynomial $x^n-1$ annihilates every element of $L$.

\begin{definition}\label{def:ord}
For $\alpha\in L$, let
$\mathrm{Ann}(\alpha)=\{h\in K[x]:h\bullet\alpha=0\}$. This is an ideal of $K[x]$, which contains $x^n-1$. Its
unique monic generator is called the \emph{$\F_q$-order} of $\alpha$ and is denoted by
$\Ord(\alpha)$. Thus $\Ord(\alpha)$ divides $x^n-1$, and
\begin{equation}\label{eq:ord}
h\bullet\alpha=0\iff\Ord(\alpha)\mid h.
\end{equation}
\end{definition}

\noindent For $\alpha\in L$, let
$U_\alpha=\operatorname{span}_K\{\alpha,\alpha^q,\ldots,\alpha^{q^{n-1}}\}$
be the $K$-span of the conjugates of $\alpha$. The map
$K[x]\to L$, $f\mapsto f\bullet\alpha$, is $K$-linear with kernel
$\mathrm{Ann}(\alpha)$ and image $U_\alpha$, so that
$U_\alpha\cong K[x]/\langle\Ord(\alpha)\rangle$ and
$\dim_K U_\alpha=\deg\Ord(\alpha)$. In particular, $\alpha$ is normal
over $K$ if and only if $\Ord(\alpha)=x^n-1$. The next two lemmas
characterize $k$-normal elements in terms of the $\F_q$-order and count
the elements of $L$ with a given $\F_q$-order.

\begin{lemma}[{\cite[Theorem 3.2]{HMPT}}]\label{prop:knormorder}
Let \(\alpha\in L\) and $0\le k\le n-1$.
The following statements are equivalent:
\begin{enumerate}
\item
\(\alpha\) is \(k\)-normal over \(K\).

\item
The set
$
\{\alpha,\alpha^q,\ldots,\alpha^{q^{n-k-1}}\}
$ is a basis of $U_\alpha$; in particular, $\dim_K U_\alpha=n-k$.

\item
\(\deg(\operatorname{Ord}(\alpha))=n-k\).
\end{enumerate}
\end{lemma}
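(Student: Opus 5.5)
Since $\Ord(\alpha)$, $U_\alpha$ and the polynomial $m_\alpha$ are all built from the conjugates $\alpha^{q^i}$, the plan is to translate $k$-normality into a statement about a gcd of polynomials over $K$ and compare it with $\Ord(\alpha)$. We will show that
\[
\gcd\bigl(m_\alpha(x),x^n-1\bigr)=\frac{x^n-1}{\Ord(\alpha)} .
\]
Then (1)$\iff$(3) is immediate from Definition \ref{def:knormal}. For (2)$\iff$(3) we use that $U_\alpha\cong K[x]/\langle\Ord(\alpha)\rangle$ with $\alpha^{q^i}$ corresponding to $x^i$.

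\textbf{Step 1: (2)$\iff$(3).} Put $d=\deg\Ord(\alpha)$. Under the isomorphism $K[x]/\langle \Ord(\alpha)\rangle\to U_\alpha$, $f\mapsto f\bullet\alpha$, the classes $1,x,\dots,x^{d-1}$ form a basis of the quotient. Hence $\alpha,\alpha^q,\dots,\alpha^{q^{d-1}}$ is a basis of $U_\alpha$ and $\dim_K U_\alpha=d$. So if $d=n-k$, then (2) holds. Conversely, if (2) holds, then $\dim U_\alpha=n-k=d$.

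\textbf{Step 2: the gcd identity.} Work in $L[x]/\langle x^n-1\rangle$. Write $m_\alpha(x)=\sum_{i=0}^{n-1}\alpha^{q^i}x^{n-1-i}$ and consider the $L$-linear map $T:K[x]/\langle x^n-1\rangle\otimes_K L\to L[x]/\langle x^n-1\rangle$ given by $f\mapsto f\cdot m_\alpha$. For $f=\sum_j c_jx^j\in K[x]$ we have $x^j m_\alpha\equiv\sum_i \alpha^{q^{i}}x^{n-1-i+j}$. So the coefficient of $x^{n-1-s}$ in $f m_\alpha \bmod (x^n-1)$ is $\sum_j c_j\alpha^{q^{s+j}}=(f\bullet\alpha)^{q^s}$, where the equality uses $c_j\in K$. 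Hence for $f\in K[x]$,
\[
f m_\alpha\equiv 0 \pmod{x^n-1}\iff f\bullet\alpha=0\iff \Ord(\alpha)\mid f,
\]
by \eqref{eq:ord}. On the other hand, the annihilator of $m_\alpha$ in $L[x]/\langle x^n-1\rangle$ is generated by $g^*=(x^n-1)/\gcd(m_\alpha,x^n-1)$. This ideal is stable under the coefficientwise Frobenius $\sigma$ (raising coefficients to the $q$th power), because $\sigma(m_\alpha)=x^{-1}m_\alpha \bmod (x^n-1)$ and $x$ is a unit. A monic generator fixed by $\sigma$ lies in $K[x]$. Therefore $g^*\in K[x]$, and $g^*$ equals the monic generator of the annihilator restricted to $K[x]$, namely $\Ord(\alpha)$. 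This gives $\deg\gcd(m_\alpha,x^n-1)=n-\deg\Ord(\alpha)$.

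\textbf{Step 3: conclude.} $\alpha$ is $k$-normal iff $\deg\gcd=k$ iff $\deg\Ord(\alpha)=n-k$, which is (1)$\iff$(3). Combined with Step 1 this gives all three equivalences.

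The main obstacle is Step 2: the descent of the annihilator from $L[x]$ to $K[x]$. The Frobenius-invariance argument handles it. First check $\sigma(m_\alpha)\equiv x^{-1}m_\alpha$ up to reindexing modulo $x^n-1$. Then note that the gcd of $m_\alpha$ and $x^n-1$ is determined up to units by that ideal, so its monic form is $\sigma$-fixed and hence has coefficients in $K$.
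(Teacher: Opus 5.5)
Your argument is correct. The paper gives no proof of this lemma (it is imported from \cite[Theorem 3.2]{HMPT}), so your proposal supplies a self-contained one. It uses only \eqref{eq:ord} and the isomorphism $U_\alpha\cong K[x]/\langle\Ord(\alpha)\rangle$ from Section~\ref{sec:prelim}.

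A standard route to such statements, generalizing the circulant argument for normal elements, goes through the matrix $\bigl(\alpha^{q^{i+j}}\bigr)_{0\le i,j\le n-1}$. As a circulant, its rank over $L$ is $n-\deg\gcd(m_\alpha,x^n-1)$. As the Moore matrix of $\alpha,\alpha^q,\dots,\alpha^{q^{n-1}}$, its rank is $\dim_K U_\alpha$.

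Your Step~2 is the first half of this in polynomial language. Your coefficient computation shows that multiplication by $m_\alpha$ on $L[x]/\langle x^n-1\rangle$ has exactly this matrix with respect to the bases $(x^j)$ and $(x^{n-1-s})$. You replace the Moore-matrix rank fact by Frobenius descent of the principal ideal $\langle g^*\rangle$. This buys the exact identity $\gcd(m_\alpha,x^n-1)=(x^n-1)/\Ord(\alpha)$, not merely an equality of degrees, using only ideal arithmetic in $L[x]$.

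Three small points.
\begin{enumerate}
\item The correct relation is $\sigma(m_\alpha)\equiv x\,m_\alpha\pmod{x^n-1}$, not $x^{-1}m_\alpha$; indeed $\sigma(m_\alpha)-x\,m_\alpha=-\alpha(x^n-1)$. This is harmless, since you only use that the two differ by a unit modulo $x^n-1$.
\item The identification $g^*=\Ord(\alpha)$ uses $g^*L[x]\cap K[x]=g^*K[x]$ for $g^*\in K[x]$, which follows from uniqueness in the division algorithm. That deserves an explicit line.
\item In Step~1 you are right to treat $\dim_K U_\alpha=n-k$ as part of condition (2). Read literally as a statement about a set, (2) would hold for every $\alpha\in K^\times$ and every $k$, because the set collapses to $\{\alpha\}$.
\end{enumerate}
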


\begin{lemma}\label{lem:ordercount}
Let $\Phi_q$ be as in \eqref{122}, and let $0\le k\le n-1$.
\begin{enumerate}
    \item  For every monic divisor $h$ of $x^n-1$, exactly \(\Phi_q(h)\) elements of \(L\) have \(\mathbb F_q\)-order \(h\)  $($see \cite[Theorem 11]{Ore}$)$.
\item  The number of \(k\)-normal elements of $L$ over
\(K\) is
\[
\sum_{\substack{h\mid x^n-1\\ \deg(h)=n-k}}
\Phi_q(h),
\]
where $h$ runs over the monic divisors of $x^n-1$ in $K[x]$
$($see \cite[Theorem 3.5]{HMPT}$)$.
\end{enumerate}
\end{lemma}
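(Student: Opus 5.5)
The plan is to reduce both parts to counting generators of a cyclic module. By the normal basis theorem there is a normal element $\beta\in L$, so $\Ord(\beta)=x^n-1$. Hence the $K$-linear map $K[x]\to L$, $f\mapsto f\bullet\beta$, is surjective: its image $U_\beta$ has dimension $n=\dim_K L$. Its kernel is $\langle x^n-1\rangle$. This map is also a $K[x]$-module homomorphism, so it gives a $K[x]$-module isomorphism
\[
L\;\cong\; R:=K[x]/\langle x^n-1\rangle ,
\]
and this isomorphism preserves annihilators. So $\Ord(\alpha)$ becomes the monic generator of the annihilator of the corresponding class in $R$. Part (1) is then the statement that $R$ contains exactly $\Phi_q(h)$ elements with annihilator $\langle h\rangle$, for each monic $h\mid x^n-1$.

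To count these elements, write $g=(x^n-1)/h$. An element $\bar f\in R$ satisfies $h\bullet\bar f=0$ exactly when $x^n-1$ divides $hf$, that is, when $g\mid f$. So the $h$-torsion submodule of $R$ is $gR$. Multiplication by $g$ induces an isomorphism $K[x]/\langle h\rangle\to gR$, $\bar u\mapsto \overline{gu}$. It is well defined and injective because $gu\equiv 0 \pmod{x^n-1}$ if and only if $h\mid u$, and it is surjective by the description of $gR$ just given.

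Under this isomorphism, the elements whose annihilator is exactly $\langle h\rangle$ correspond to the $\bar u\in K[x]/\langle h\rangle$ with annihilator exactly $\langle h\rangle$. These are the $\bar u$ that generate $K[x]/\langle h\rangle$ as a module over itself, i.e. the units. Indeed, if $d=\gcd(u,h)\neq1$, then $(h/d)\bullet \bar u=0$ with $h/d$ a proper divisor of $h$. Conversely, if $\gcd(u,h)=1$, then $s\bullet\bar u=0$ forces $h\mid su$, hence $h\mid s$. The count is therefore $|(K[x]/\langle h\rangle)^\times|=\Phi_q(h)$, and the case $h=1$ (only $\alpha=0$) matches the convention $\Phi_q(1)=1$.

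For part (2), Lemma \ref{prop:knormorder} says that $\alpha$ is $k$-normal if and only if $\deg\Ord(\alpha)=n-k$. The set of $k$-normal elements is therefore the disjoint union, over monic $h\mid x^n-1$ with $\deg h=n-k$, of the sets of elements with $\Ord(\alpha)=h$, since each $\alpha$ has a unique $\F_q$-order. Summing part (1) over these $h$ gives the stated formula.

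The only step that needs care is the identification of $L$ with $R$ as $K[x]$-modules, which depends on the existence of a normal element. I would quote the normal basis theorem for this (for example \cite[Theorem 2.35]{LN}) rather than reprove it. Everything after that is elementary algebra in the principal ideal domain $K[x]$.
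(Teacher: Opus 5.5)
Your proof is correct. The paper gives no argument of its own here: it quotes (1) from Ore and (2) from Huczynska et al. Your derivation of (2) from (1) and Lemma~\ref{prop:knormorder}, partitioning the $k$-normal elements by their $\F_q$-order, is the same argument the paper runs in the proof of Theorem~\ref{thm:knormal}(i), there restricted to divisors of $Q$. For (1), each step holds:
\begin{itemize}
\item the normal element gives a $K[x]$-module isomorphism $L\cong K[x]/\langle x^n-1\rangle$;
\item the $h$-torsion of this quotient is the submodule generated by $(x^n-1)/h$, and it is isomorphic to $K[x]/\langle h\rangle$;
\item inside $K[x]/\langle h\rangle$, the elements with annihilator exactly $\langle h\rangle$ are the units, and this includes the case $h=1$.
\end{itemize}
So you replace a citation by a short proof whose only external input is the normal basis theorem.

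The price of your route is that dependence on the normal basis theorem. Here is an alternative that avoids it. Put $V_h=\{\alpha\in L: h\bullet\alpha=0\}$, the set of roots in $L$ of $\ell_h(X)=\sum_i h_iX^{q^i}$. Since $h\mid x^n-1$, the polynomial $\ell_h$ is separable: its derivative is the constant $h(0)\neq0$. Moreover $X^{q^n}-X=\ell_{h'}(\ell_h(X))$, where $h'=(x^n-1)/h$ and $\ell_{h'}$ is defined in the same way. Hence all $q^{\deg h}$ roots of $\ell_h$ lie in $L$, and $|V_h|=q^{\deg h}$. Now apply inclusion--exclusion over the distinct monic irreducible factors $P$ of $h$, using $V_d\cap V_{d'}=V_{\gcd(d,d')}$. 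This gives
\[
q^{\deg h}\prod_P\bigl(1-q^{-\deg P}\bigr)=\Phi_q(h).
\]
That route is a little longer, but it proves the normal basis theorem (the case $h=x^n-1$, since $\Phi_q(x^n-1)>0$) instead of assuming it. Yours is shorter and more structural once the cyclic-module structure of $L$ is granted.

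One cosmetic point: if you insert your proof into the paper, rename your $g$ and $R$. The paper already uses $g$ for a generator of $L^\times$ and $R$ for the divisor in $(R,r)$-freeness.
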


We next record two facts about the trace. The first detects trace zero in terms of the $\F_q$-order,
and the second shows that $k$-normality is invariant under
multiplication by scalars from $K^\times$.

\begin{lemma}\label{lem:trQ}
For $\alpha \in L$, $\Tr(\alpha)=Q
\bullet \alpha$. Hence $\Tr(\alpha)=0$ if and only if
$\Ord(\alpha)\mid Q$. In particular, a normal element of $L$ over
$K$ has non-zero trace.
\end{lemma}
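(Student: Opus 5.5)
The identity $\Tr(\alpha)=Q\bullet\alpha$ is a direct computation from the definition of the module action. Write $Q=\sum_{i=0}^{n-1}x^i$, so every coefficient is $1\in K$. By definition of $\bullet$,
\[
Q\bullet\alpha=\sum_{i=0}^{n-1}\alpha^{q^i}=\Tr(\alpha).
\]

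For the equivalence, apply \eqref{eq:ord} with $h=Q$. It gives $Q\bullet\alpha=0$ if and only if $\Ord(\alpha)\mid Q$. Combined with the first identity, this says $\Tr(\alpha)=0$ if and only if $\Ord(\alpha)\mid Q$. No case analysis is needed here, since \eqref{eq:ord} holds for every $h\in K[x]$.

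For the final assertion, let $\alpha$ be normal. As recorded just before Lemma \ref{prop:knormorder}, this means $\Ord(\alpha)=x^n-1$, which has degree $n$. Since $\deg Q=n-1<n$ and $Q\neq 0$, the polynomial $x^n-1$ cannot divide $Q$. By the equivalence just proved, $\Tr(\alpha)\neq 0$.

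There is no real obstacle. The only point needing care is the degree comparison, which must not depend on the characteristic. It does not: $x^n-1$ and $Q$ are monic of degrees $n$ and $n-1$ over any field, even when $p\mid n$ and $Q$ has repeated factors.
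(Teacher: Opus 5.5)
Your proposal is correct and follows the paper's proof essentially verbatim: you compute $Q\bullet\alpha=\sum_{i=0}^{n-1}\alpha^{q^i}=\Tr(\alpha)$, apply \eqref{eq:ord} with $h=Q$, and conclude the normal case from $x^n-1\nmid Q$. Your degree argument (nonzero $Q$ of degree $n-1$ cannot be divisible by a polynomial of degree $n$) is exactly what justifies the paper's one-line remark, and it holds in every characteristic.
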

\begin{proof}
$Q\bullet\alpha=\sum_{i=0}^{n-1}\alpha^{q^{i}}=\Tr(\alpha)$. Now apply \eqref{eq:ord} with $h=Q$. For the last assertion, note that $x^n-1\nmid Q$.
\end{proof}

\begin{lemma}\label{lem:knormstable}
For $\lambda\in K^{\times}$ and $\alpha\in L$ one has $\Ord(\lambda\alpha)=\Ord(\alpha)$; in
particular $\lambda\alpha$ is $k$-normal if and only if $\alpha$  is $k$-normal.
\end{lemma}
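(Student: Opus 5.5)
The plan is to compare the annihilator ideals directly, using the $K$-linearity of the module action. For $\lambda\in K^\times$, $\alpha\in L$ and $f=\sum_i a_ix^i\in K[x]$, we have $(\lambda\alpha)^{q^i}=\lambda^{q^i}\alpha^{q^i}=\lambda\alpha^{q^i}$, because $\lambda\in K=\F_q$ is fixed by the $q$-power Frobenius. Hence
\[
f\bullet(\lambda\alpha)=\sum_i a_i\lambda\alpha^{q^i}=\lambda\,(f\bullet\alpha).
\]

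Since $\lambda\neq0$, it follows that $f\bullet(\lambda\alpha)=0$ if and only if $f\bullet\alpha=0$. Thus $\mathrm{Ann}(\lambda\alpha)=\mathrm{Ann}(\alpha)$ as ideals of $K[x]$. Their monic generators therefore coincide, which gives $\Ord(\lambda\alpha)=\Ord(\alpha)$ by Definition \ref{def:ord}.

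For the second assertion, Lemma \ref{prop:knormorder} shows that $\alpha$ is $k$-normal if and only if $\deg\Ord(\alpha)=n-k$. Since the two orders are equal, $\lambda\alpha$ is $k$-normal exactly when $\alpha$ is. This covers $0\le k\le n-1$. For $k=n$, both $\lambda\alpha$ and $\alpha$ equal $0$, or neither does, since the case $\Ord=1$ corresponds to the zero element; so that case is consistent as well.

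There is no real obstacle here. The only point that needs care is the identity $\lambda^{q^i}=\lambda$ for $\lambda\in K$, which holds because $\lambda^q=\lambda$ on $\F_q$.
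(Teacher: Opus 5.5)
Your proof is correct and follows the paper's argument exactly: you use $\lambda^{q^i}=\lambda$ to get $f\bullet(\lambda\alpha)=\lambda\,(f\bullet\alpha)$, conclude $\mathrm{Ann}(\lambda\alpha)=\mathrm{Ann}(\alpha)$ because $\lambda\neq0$, and then apply Lemma~\ref{prop:knormorder}. Your extra remark about $k=n$ is also correct, though not needed, since Lemma~\ref{prop:knormorder} is stated only for $0\le k\le n-1$.
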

\begin{proof}
As $\lambda\in K$ we have $\lambda^{q^{i}}=\lambda$, so $h\bullet(\lambda\alpha)=
\lambda(h\bullet\alpha)$ for every $h\in K[x]$; since $\lambda\neq0$,
$\mathrm{Ann}(\lambda\alpha)=\mathrm{Ann}(\alpha)$. The second assertion follows from Lemma \ref{prop:knormorder}.
\end{proof}

Finally, we express the trace condition as a character sum. Let $\tau$ be the canonical additive character of $K$,
\[
\tau(y)
=
\exp\left(
\frac{2\pi i}{p}\operatorname{Tr}_{K/\mathbb F_p}(y)
\right),
\qquad y\in K;
\]
it is nontrivial because $\operatorname{Tr}_{K/\F_p}$ is
surjective. For \(a\in K\), define the characteristic function
\(\mathcal{T}_a:L\to\{0,1\}\) by
\[
\mathcal{T}_a(\alpha)=
\begin{cases}
1, & \text{if }\operatorname{Tr}_{L/K}(\alpha)=a,\\
0, & \text{otherwise}.
\end{cases}
\]
Applying the orthogonality relation to the characters $y\mapsto\tau(uy)$ of $K$, $u\in K$, we obtain
\begin{equation}\label{eq:trace-characteristic}
\mathcal{T}_a(\alpha)
=
\frac{1}{q}\sum_{u\in K}
\tau\!\left(
u\bigl(\operatorname{Tr}_{L/K}(\alpha)-a\bigr)
\right).
\end{equation}

\section{Proof of Theorem \ref{thm:knormal}}{\label{sec3}}

\begin{proof}[Proof of Theorem \ref{thm:knormal}]
\textup{(i)} By Lemma \ref{prop:knormorder}, an element $\alpha\in L$ is
$k$-normal over $K$ if and only if
$
\deg\Ord(\alpha)=n-k,
$
and by Lemma \ref{lem:trQ},
$
\Tr(\alpha)=0
\Longleftrightarrow
\Ord(\alpha)\mid Q.
$
Hence the $k$-normal elements having trace zero are partitioned
according to the value $h=\Ord(\alpha)$, which ranges over the
monic divisors of $Q$ of degree $n-k$. For each such divisor $h$, Lemma \ref{lem:ordercount} gives exactly
$\Phi_q(h)$ elements having $\F_q$-order $h$. Summing over $h$ gives
\[
\mathcal{N}_{k}(0)
=
\sum_{\substack{h\mid Q\\ \deg h=n-k}}
\Phi_q(h).
\]

\noindent \textup{(ii)}
We first show that $\mathcal{N}_{k}(a)$ is independent of
$a\in K^\times$. Let $a,b\in K^\times$ and 
$
\lambda=ba^{-1}.
$
The map
$
\alpha\longmapsto\lambda\alpha
$
is a bijection of $L$ which preserves
$k$-normality (Lemma \ref{lem:knormstable}) and, since the trace is $K$-linear, multiplies the trace by $\lambda$.
Therefore, it sends the $k$-normal elements of trace $a$
bijectively onto the $k$-normal elements of trace $b$, so that
$\mathcal{N}_{k}(a)=\mathcal{N}_{k}(b)$.

\noindent To compute the common value, let $m=n-k$. By Lemma \ref{lem:ordercount}, the total number of $k$-normal elements
of $L$ is
\[
\Sigma_k:
=
\sum_{\substack{h\mid x^n-1\\ \deg h=m}}
\Phi_q(h).
\]
Counting the $k$-normal elements according to their trace, we
obtain $\Sigma_k=\mathcal{N}_{k}(0)+(q-1)\mathcal{N}_{k}(a)$ for any $a\in K^\times$, and hence, by part (i),
\begin{equation}\label{eq:diff}
(q-1)\mathcal{N}_{k}(a)
=
\sum_{\substack{h\mid x^n-1\\ \deg h=m}}\Phi_q(h)
-
\sum_{\substack{h\mid Q\\ \deg h=m}}\Phi_q(h).
\end{equation}
Write
$
n=p^\nu n_0,\; p\nmid n_0.
$
In characteristic $p$,
$
x^n-1=(x^{n_0}-1)^{p^\nu},
$
and the polynomial $x^{n_0}-1$ is square-free because $p\nmid n_0$.
Separating its factor $x-1$, we obtain
$
x^n-1=(x-1)^{p^\nu}P_0,
\;
P_0=
\left(\dfrac{x^{n_0}-1}{x-1}\right)^{p^\nu},
$
where
$
\gcd(x-1,P_0)=1.
$
Consequently,
$
Q=(x-1)^{p^\nu-1}P_0.
$ Every monic divisor of $x^n-1$ can therefore be written uniquely as
\[
h=(x-1)^i h_0,
\qquad
0\leq i\leq p^\nu,\qquad h_0\mid P_0,
\]
and such a divisor divides $Q$ if and only if $i\leq p^\nu-1$. Since $(x-1)^i$ and $h_0$ are co-prime, we have
\[
\Phi_q\bigl((x-1)^ih_0\bigr)
=
\Phi_q\bigl((x-1)^i\bigr)\Phi_q(h_0).
\]
Grouping the divisors in \eqref{eq:diff} according to the exponent $i$, we see that the two sums in \eqref{eq:diff} contain the same terms for $0\le i\le p^\nu-1$, and only the term $i=p^\nu$ of the first sum survives. Therefore
\[
(q-1)\mathcal{N}_{k}(a)
=
\Phi_q\bigl((x-1)^{p^\nu}\bigr)
\sum_{\substack{h_0\mid P_0\\
\deg h_0=m-p^\nu}}
\Phi_q(h_0).
\]
For every integer $j\geq1$,
$
\Phi_q\bigl((x-1)^j\bigr)=q^{j-1}(q-1),
$
since the non-units of $K[x]/\langle (x-1)^j\rangle$ are the $q^{j-1}$ residues divisible by $x-1$.
Since $m=n-k$, division by $q-1$ now gives
\[
\mathcal{N}_{k}(a)
=
q^{p^\nu-1}
\sum_{\substack{h_0\mid P_0\\
\deg h_0=n-k-p^\nu}}
\Phi_q(h_0),
\qquad a\in K^\times.
\]
If $n-k-p^\nu<0$, the sum is empty and $\mathcal{N}_{k}(a)=0$.
This completes the proof.
\end{proof}

When $n \mid q-1$, the polynomial $x^n-1$ splits into $n$ distinct linear factors over $K$,
and Theorem \ref{thm:knormal} yields closed formulas.
\begin{corollary}\label{cor1}
Suppose that \(n\mid q-1\). For \(0\leq k\le n-1\) and \(a\in K\),
\[
\mathcal{N}_{k}(a)=
\begin{cases}
\displaystyle
\binom{n-1}{k-1}(q-1)^{n-k},
& \text{if } a=0,\\[12pt]
\displaystyle
\binom{n-1}{k}(q-1)^{n-k-1},
& \text{if } a\in K^\times,
\end{cases}
\]
where \(\binom{n-1}{-1}=0\).
\end{corollary}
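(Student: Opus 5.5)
The plan is to specialize Theorem \ref{thm:knormal} to the case $n\mid q-1$. Here $p\nmid n$, because $n$ divides $q-1$ and $p\nmid q-1$. Hence $\nu=v_p(n)=0$ and $p^\nu=1$, so that $P_0=Q=(x^n-1)/(x-1)$. Since $n\mid q-1$, the group $K^\times$ contains $n$ distinct $n$-th roots of unity $1=\zeta_1,\zeta_2,\dots,\zeta_n$. Therefore $x^n-1=\prod_{i=1}^n(x-\zeta_i)$ splits into distinct linear factors, and $Q=\prod_{i=2}^n(x-\zeta_i)$ is a product of $n-1$ distinct linear factors.

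First I will evaluate $\Phi_q(h)$ for each monic divisor $h$ of $Q$. Such an $h$ is a product of $\deg h$ distinct linear factors. By the Chinese remainder theorem, $K[x]/\langle h\rangle\cong K^{\deg h}$, so $\Phi_q(h)=(q-1)^{\deg h}$. This is consistent with the identity $\Phi_q((x-1)^j)=q^{j-1}(q-1)$ used in the proof of Theorem \ref{thm:knormal}, taken at $j=1$. The monic divisors of $Q$ of a given degree $d$ correspond to $d$-element subsets of $\{\zeta_2,\dots,\zeta_n\}$, so there are exactly $\binom{n-1}{d}$ of them.

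For $a=0$, part (i) of Theorem \ref{thm:knormal} sums over the divisors of $Q$ of degree $n-k$. This gives $\mathcal{N}_k(0)=\binom{n-1}{n-k}(q-1)^{n-k}=\binom{n-1}{k-1}(q-1)^{n-k}$. When $k=0$ there is no divisor of $Q$ of degree $n$, so the sum is empty and equals $0$, which matches the convention $\binom{n-1}{-1}=0$.

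For $a\in K^\times$, part (ii) applies with $p^\nu=1$. It gives
\[
\mathcal{N}_k(a)=q^{0}\sum_{\substack{h\mid Q\\ \deg h=n-k-1}}(q-1)^{n-k-1}=\binom{n-1}{n-k-1}(q-1)^{n-k-1}=\binom{n-1}{k}(q-1)^{n-k-1}.
\]
This is valid for every $0\le k\le n-1$, since then $n-k-1\ge 0$.

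The only point needing care is the edge case $k=0$ for trace zero together with the binomial convention, and this was handled above. Otherwise the argument is routine bookkeeping.
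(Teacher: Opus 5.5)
Your proposal is correct and follows the paper's proof essentially step for step. Like the paper, you specialize both parts of Theorem~\ref{thm:knormal}, using $p\nmid n$ (so $P_0=Q$), the splitting of $x^n-1$ into distinct linear factors, the count $\binom{n-1}{j}$ of degree-$j$ monic divisors of $Q$, and $\Phi_q(h)=(q-1)^{\deg h}$; you also treat the $k=0$ trace-zero case through the same binomial convention.
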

\begin{proof}
Since $n\mid q-1$, we have $p\nmid n$, and $x^n-1$ is a product
of $n$ distinct monic linear factors over $K$. Consequently, $Q$ is
the product of the $n-1$ linear factors different from $x-1$, and every monic divisor of $Q$ of degree $j$ is a product of $j$ distinct linear factors; there are $\binom{n-1}{j}$ such divisors, and each of them has $\Phi_q$-value $(q-1)^j$, because $\Phi_q$ is multiplicative and $\Phi_q(x-c)=q-1$ for $c\in K$.

\noindent Taking $j=n-k$ in Theorem \ref{thm:knormal} \textup{(i)} gives
\[
\mathcal{N}_{k}(0)
=
\binom{n-1}{n-k}(q-1)^{n-k}=
\binom{n-1}{k-1}(q-1)^{n-k},
\]
where $\binom{n-1}{-1}=0$ when $k=0$.
Now let $a\in K^\times$. Since $p\nmid n$, we have $\nu=0$ and $P_0=Q$, so
Theorem \ref{thm:knormal} \textup{(ii)}  gives
\[
\mathcal{N}_{k}(a)
=
\sum_{\substack{h\mid Q\\
\deg h=n-k-1}}
\Phi_q(h)
=\binom{n-1}{n-k-1}(q-1)^{n-k-1}=\binom{n-1}{k}(q-1)^{n-k-1}.
\]
\end{proof}

We next record the cases in which $\mathcal{N}_{k}$, and hence $\mathcal{N}_{r,k}$, vanishes on $K^\times$.

\begin{remark}\label{rem:degenerate}
If \(x^n-1\) has no monic divisor of degree \(n-k\) in
\(K[x]\), then Lemma~\ref{prop:knormorder} gives
\(\mathcal{N}_{k}(a)=0\) for every \(a\in K\).
Now suppose that \(n=p^\nu\), where \(\nu\geq1\). Then
$
x^n-1=(x-1)^n
$ and
$
Q=(x-1)^{n-1}.
$
If $\alpha\in L$ is $k$-normal with $1\leq k\leq n-1$, then
$
\Ord(\alpha)=(x-1)^{n-k}\mid Q,
$
so $\Tr(\alpha)=0$ by Lemma \ref{lem:trQ}. Therefore
$
\mathcal{N}_{k}(a)=0$ for all
$a\in K^\times$ and $1\leq k\leq n-1$ (in agreement with Theorem \ref{thm:knormal} (ii), since $n-k-p^\nu<0$).
Since \(\mathcal{N}_{r,k}(a)\leq \mathcal{N}_{k}(a)\), these vanishing conclusions
also hold for \(\mathcal{N}_{r,k}(a)\), for every \(r\mid N\).
\end{remark}

The following example illustrates Corollary \ref{cor1}.

\begin{example}
Let \(n=4\) and \(q\equiv1\pmod4\), so that \(L=\F_{q^4}\)
and \(K=\F_q\). By Corollary~\ref{cor1}, we have
\[
\begin{array}{c|cc}
k & \mathcal{N}_{k}(0) & \mathcal{N}_{k}(a),\; a\in K^\times \\ \hline
0 & 0          & (q-1)^3    \\
1 & (q-1)^3    & 3(q-1)^2   \\
2 & 3(q-1)^2   & 3(q-1)     \\
3 & 3(q-1)     & 1
\end{array}
\]
The first row confirms that every normal element of \(L\)
over \(K\) has non-zero trace.
We can also verify \(\mathcal{N}_3(a)=1\) for \(a\in K^\times\)
directly. If \(\alpha\in L\) is \(3\)-normal over \(K\),
then \(\deg\Ord(\alpha)=1\). Since \(\Ord(\alpha)\)
is a monic divisor of \(x^4-1\), it has the form
\(\Ord(\alpha)=x-c\), where \(c\in K^\times\) and \(c^4=1\).
Thus \(\alpha^q=c\alpha\). As \(c\in K\), iteration gives
$
\alpha^{q^j}=c^j\alpha
\text{ for }0\leq j\leq3,
$
and consequently
$
\Tr(\alpha)=(1+c+c^2+c^3)\alpha.
$
If \(c\ne1\), then
$
1+c+c^2+c^3=\frac{c^4-1}{c-1}=0,
$
so \(\alpha\) has trace zero. If \(c=1\), then
\(\alpha^q=\alpha\), whence \(\alpha\in K^\times\)
and \(\Tr(\alpha)=4\alpha\).

\noindent Conversely, every \(\alpha\in K^\times\) has
\(\Ord(\alpha)=x-1\) and is therefore \(3\)-normal over \(K\).
Since \(q\equiv1\pmod4\), the characteristic of \(K\)
is odd, so multiplication by \(4\) is a bijection on
\(K^\times\). Hence, for each \(a\in K^\times\),
the unique \(3\)-normal element with trace \(a\) is
\(\alpha=a/4\).
Finally, the total number of \(3\)-normal elements is
\[
\sum_{a\in K}\mathcal{N}_3(a)
=
3(q-1)+(q-1)
=
4(q-1)
=
\sum_{\substack{h\mid x^4-1\\ \deg h=1}}\Phi_q(h),
\]
where the last equality follows because \(x^4-1\)
has four distinct monic linear factors over \(K\).
\end{example}

\section{Proofs of Theorem
\ref{thm:uniform} and Proposition \ref{prp}}
\label{sec:uniform-character}

We deduce Theorem \ref{thm:uniform} from a more general statement,
in which $r$-primitivity is replaced by $(R,r)$-freeness (Definition
\ref{definition rr}). Throughout this section, fix a generator $g$ of
$L^\times$, so that $K^\times=\langle g^M\rangle$, and let $\ind_k$
denote the characteristic function of the set of $k$-normal elements of
$L$ over $K$. For $r\mid N$, $R\mid \frac{N}{r}$ and $a\in K$, let
\[
\mathcal{N}_{R,r,k}(a)
=
\#\bigl\{
\alpha\in L:\
\alpha\text{ is $(R,r)$-free and $k$-normal over $K$},\
\Tr(\alpha)=a
\bigr\}.
\]
Since the $(\frac{N}{r},r)$-free elements of $L$ are precisely the
$r$-primitive elements, we have
$\mathcal{N}_{N/r,r,k}=\mathcal{N}_{r,k}$.

\begin{theorem}\label{prop:Rr-stable-uniform}
Let $r\mid N$, $R\mid \frac{N}{r}$ and $0\le k\le n-1$. If
$
r\rad(R)\mid M,
$
then $\mathcal{N}_{R,r,k}$ is constant on $K^\times$.
\end{theorem}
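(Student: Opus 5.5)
The plan is to run the scaling argument from the proof of Theorem~\ref{thm:knormal}(ii), with one extra ingredient. I will show that, under the hypothesis $r\rad(R)\mid M$, multiplication by any $\lambda\in K^\times$ maps the set of $(R,r)$-free elements of $L$ onto itself. Granting this, fix $a,b\in K^\times$ and put $\lambda=ba^{-1}$. The map $\alpha\mapsto\lambda\alpha$ is a bijection of $L$. It preserves $k$-normality by Lemma~\ref{lem:knormstable}, and it preserves $(R,r)$-freeness by the claim. Since $\Tr$ is $K$-linear, it multiplies the trace by $\lambda$. Hence it maps the $(R,r)$-free $k$-normal elements of trace $a$ bijectively onto those of trace $b$, so $\mathcal{N}_{R,r,k}(a)=\mathcal{N}_{R,r,k}(b)$.

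For the claim, I first make $(R,r)$-freeness explicit in terms of the fixed generator $g$. The subgroup $Z_r$ is cyclic of order $N/r$ with generator $g^r$, so every $\alpha\in Z_r$ can be written $\alpha=g^{rs}$ for some $s$, unique modulo $N/r$. I will check that $\alpha$ is $R$-free in $Z_r$ if and only if $\gcd(s,R)=1$.

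It suffices to test prime divisors $\ell$ of $R$. Indeed, if $\alpha=\gamma^d$ with $d\mid R$ and $d>1$, then $\alpha$ is an $\ell$-th power for any prime $\ell\mid d$. Now, in a cyclic group of order $N/r$ with $\ell\mid R\mid N/r$, the $\ell$-th powers are exactly the elements $(g^r)^{s}$ with $\ell\mid s$. This condition is well defined modulo $N/r$ because $\ell\mid N/r$. This characterization is the step I expect to need the most care, mainly to confirm that it is independent of the choice of $s$. It can also be read off from the character formula \eqref{Rr_free}, but the direct argument is simpler.

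Now take $\lambda=g^{jM}\in K^\times$. Since $r\mid M$, we have $\lambda\in Z_r$, and therefore $\lambda\alpha\in Z_r$ if and only if $\alpha\in Z_r$. For $\alpha=g^{rs}$ we get $\lambda\alpha=(g^r)^{s+jM/r}$. The hypothesis $r\rad(R)\mid M$ gives $\rad(R)\mid M/r$, so every prime $\ell\mid R$ divides $jM/r$. Consequently $\ell\mid s$ if and only if $\ell\mid s+jM/r$, and by the characterization $\lambda\alpha$ is $R$-free in $Z_r$ exactly when $\alpha$ is. This proves the claim, and with it the theorem.
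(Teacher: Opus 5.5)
Your proof is correct, but it takes a different route from the paper's proof of this theorem.

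The paper works on the character side. It writes $\mathcal{N}_{R,r,k}(a)=\frac1q\sum_{y\in K}\tau(-ay)S_k(y)$ and expands $\Gamma_{R,r}$ via \eqref{Rr_free}. It then substitutes $\alpha=y^{-1}\gamma$, so that each multiplicative character $\chi$ contributes a factor $\chi(y^{-1})$. Using \eqref{rf}, it observes that only characters of order dividing $r\rad(R)\mid M$ have nonzero coefficients. These are trivial on $K^\times=\langle g^M\rangle$, so $S_k(y)=S_k(1)$ for all $y\in K^\times$, and the common value is $(S_k(0)-S_k(1))/q$.

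You instead give a direct bijection: every $\lambda=g^{jM}\in K^\times$ permutes the $(R,r)$-free elements. The hypothesis enters in two places. First, $r\mid M$ puts $\lambda$ in $Z_r$. Second, $\rad(R)\mid M/r$ makes the shift $s\mapsto s+jM/r$ preserve $\gcd(s,R)=1$. This is exactly Lemma \ref{lem:Rr-exponent} together with the sufficiency half of Lemma \ref{lem:multipliers}. Your conclusion is therefore the case $H_{R,r}=K^\times$ of Theorem \ref{thm:general-multipliers}, which the paper proves later in Section \ref{sec5}; the paper itself notes that the uniformity theorem is this special case.

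Your route is more elementary. It shows that uniformity comes from an actual symmetry of the counted set rather than from cancellation in a character sum, and it extends verbatim to the coset statement. The paper's route expresses the common value through explicit character sums. It also isolates the subgroup of characters of order dividing $r\rad(R)$, and that same computation is reused in Section \ref{sec6} to build the criterion of Theorem \ref{thm:uniformity}.
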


\begin{proof}
By \eqref{eq:trace-characteristic} and the identity
$\tau\bigl(y(\Tr(\alpha)-a)\bigr)=\tau(-ay)\,\tau\bigl(y\Tr(\alpha)\bigr)$,
\[
\mathcal{N}_{R,r,k}(a)
=
\sum_{\alpha\in L^\times}
\Gamma_{R,r}(\alpha)\ind_k(\alpha)
\mathcal{T}_a(\alpha)
=
\frac{1}{q}
\sum_{y\in K}\tau(-ay)S_k(y),
\]
where
\[
S_k(y)
=
\sum_{\alpha\in L^\times}
\Gamma_{R,r}(\alpha)\ind_k(\alpha)
\tau\bigl(y\Tr(\alpha)\bigr).
\]
We show that $S_k(y)$ is independent of $y\in K^\times$. Using the
character expansion \eqref{Rr_free} of $\Gamma_{R,r}$, we
obtain
\[
S_k(y)
=
\frac{\theta(R)}{r}
\sum_{l\mid Rr}
\frac{\mu\bigl(l_{(r)}\bigr)}
     {\varphi\bigl(l_{(r)}\bigr)}
\sum_{\ord(\chi)=l}
\sum_{\alpha\in L^\times}
\ind_k(\alpha)\chi(\alpha)
\tau\bigl(y\Tr(\alpha)\bigr),
\]
where the middle sum runs over the multiplicative characters $\chi$ of $L^\times$ of order $l$.
Fix $y\in K^\times$ and substitute $\alpha=y^{-1}\gamma$, where $\gamma$ runs over $L^\times$. By Lemma \ref{lem:knormstable},
$
\ind_k(y^{-1}\gamma)=\ind_k(\gamma).
$
Also, 
$
\chi(y^{-1}\gamma)=\chi(y^{-1})\chi(\gamma),
$
and the $K$-linearity of the trace gives
$
y\Tr(y^{-1}\gamma)=\Tr(\gamma).
$
Consequently,
\begin{align}{\label{44}}
S_k(y)
=
\frac{\theta(R)}{r}
\sum_{l\mid Rr}
\frac{\mu\bigl(l_{(r)}\bigr)}
     {\varphi\bigl(l_{(r)}\bigr)}
\sum_{\ord(\chi)=l}
\chi(y^{-1})
\sum_{\gamma\in L^\times}
\ind_k(\gamma)\chi(\gamma)
\tau\bigl(\Tr(\gamma)\bigr).
\end{align}
An index $l$ contributes to \eqref{44} only if
$
\mu\bigl(l_{(r)}\bigr)\neq0,
$
that is, only if $l_{(r)}$ is square-free. For such $l$ and every prime $\ell$,
$
v_\ell\bigl(l_{(r)}\bigr)
=
\max\{v_\ell(l)-v_\ell(r),0\}
\leq1,
$
and combining this with $l\mid Rr$ gives
\begin{align}{\label{rf}}
v_\ell(l)
\leq
\min\bigl\{
v_\ell(r)+v_\ell(R),\,
v_\ell(r)+1
\bigr\}=
v_\ell(r)+\min\{v_\ell(R),1\}=
v_\ell\bigl(r\rad(R)\bigr).
\end{align}
Thus $l\mid r\rad(R)$, and by hypothesis $l\mid M$. A character $\chi$ of $L^\times$ of order $l\mid M$ is trivial on $K^\times=\langle g^M\rangle$, since $\chi(g^M)=\chi(g)^M=1$. Hence $\chi(y^{-1})=1$ for every character occurring in \eqref{44} with a non-zero coefficient, and the right-hand side of \eqref{44} does not depend on $y\in K^\times$. Therefore $S_k(y)=S_k(1)$ for all $y\in K^\times$.

\noindent Now let $a\in K^\times$, then the map $y\mapsto -ay$ is a bijection
of $K$. As $\tau$ is nontrivial, the orthogonality relation
\eqref{ortho} gives

\[
\sum_{y\in K}\tau(-ay)=0,
\quad
\text{and therefore}
\quad
\sum_{y\in K^\times}\tau(-ay)=-1.
\]
Consequently,
\[
 \mathcal{N}_{R,r,k}(a)
=
\frac{1}{q}
\left(
S_k(0)
+
S_k(1)\sum_{y\in K^\times}\tau(-ay)
\right)=
\frac{1}{q}\bigl(S_k(0)-S_k(1)\bigr),
\]
which is independent of $a\in K^\times$.
\end{proof}

\begin{proof}[Proof of Theorem~\ref{thm:uniform}]
Take \(R=N/r\). Then $\mathcal{N}_{N/r,r,k}=\mathcal{N}_{r,k}$ and $r\rad(N/r)=D_r$, so the hypothesis $D_r\mid M$ of Theorem \ref{thm:uniform} is exactly the hypothesis of Theorem \ref{prop:Rr-stable-uniform}.
\end{proof}

Theorems~\ref{thm:uniform} and~\ref{thm:mult} do not exclude the possibility that
the common value of the counts $\mathcal{N}_{r,k}(a)$ is zero. The following remark
provides a sufficient condition for the positivity of $\mathcal{N}_{r,k}(a),\; a \in K^\times$.
\begin{remark}\label{456}
Assume that $\gcd(n,q)=1$, let $r\mid N$, and suppose
that $Q$ has a monic divisor $f\in K[x]$
of degree $k$. Then
\[
    q^{n/2-k-1}>rW(N/r)W(Q/f)
    \quad\Longrightarrow\quad
    \mathcal{N}_{r,k}(a)>0
    \quad\text{for every }a\in K^\times,
\]
where $W(u)$ denotes the number of square-free divisors of
a positive integer $u$, and $W(B)$ denotes the number of monic
square-free divisors of a polynomial $B$.
This sufficient condition follows by adapting the character-sum
methods of \cite[Section~3]{ACN} and \cite[Theorem~4.1]{RSTP},
using the $(R,r)$-free characteristic function and additive
character orthogonality to impose the trace condition.
\end{remark}

\begin{remark}\label{rem:TW}
Theorem~\ref{prop:Rr-stable-uniform} generalizes
\cite[Theorem~1.10]{TW}.
Assume $r\operatorname{rad}(R)\mid M$, and let $\Sigma_{R,r,k}=\sum_{b\in K}\mathcal{N}_{R,r,k}(b)$ be the number of
$(R,r)$-free $k$-normal elements of $L$. By Theorem~\ref{prop:Rr-stable-uniform},
$\Sigma_{R,r,k}=\mathcal{N}_{R,r,k}(0)+(q-1)\mathcal{N}_{R,r,k}(a)$ for $a\in K^\times$, that is,
\begin{equation}\label{eq:TWk}
\mathcal{N}_{R,r,k}(a)=\frac{\Sigma_{R,r,k}-\mathcal{N}_{R,r,k}(0)}{q-1}\qquad(a\in K^\times).
\end{equation}
Every element of $L^\times$ is $k$-normal for exactly one $k$ with $0\le k\le n-1$, because $1\le\deg\Ord(\alpha)\le n$ for $\alpha\neq0$ (Lemma \ref{prop:knormorder}). Hence the sum
$P(a):=\sum_{k=0}^{n-1}\mathcal{N}_{R,r,k}(a)$ is the number of $(R,r)$-free elements of $L$ with trace $a$, for every $a\in K$.
Summing \eqref{Rr_free} over $\alpha\in L^\times$ and using orthogonality shows that $L$ contains exactly
$\frac{N}{r}\theta(R)$ elements that are $(R,r)$-free. Summing \eqref{eq:TWk} over $k$ therefore gives
\[
P(a)=\frac{\frac{N}{r}\theta(R)-P(0)}{q-1}\qquad(a\in K^\times).
\]
For $r=1$, the $(R,1)$-free elements are the $R$-free elements of $L$, the hypothesis becomes
$\operatorname{rad}(R)\mid M$, and $N\theta(R)=\frac{q^n-1}{R}\varphi(R)$. The last identity is then
\cite[Theorem~1.10]{TW}.
\end{remark}

For $r = 1$, the condition $D_1 \mid M$ can be stated in terms of $q$ and $n$ alone.

\begin{corollary}\label{cor:mult}
 If $\rad(q-1)\mid n$, then for every $0\le k\le n-1$, $\mathcal{N}_{1,k}$ is constant on $K^{\times}$.
\end{corollary}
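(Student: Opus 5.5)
Corollary \ref{cor:mult} follows from Theorem~\ref{thm:uniform} with $r=1$. In that case $D_1=\rad(N)$, so it suffices to show that $\rad(q-1)\mid n$ implies $\rad(N)\mid M$. Since $N=(q-1)M$, every prime divisor of $N$ divides $q-1$ or $M$. Those dividing $M$ already divide $M$, so the only work is to show that every prime $\ell\mid q-1$ also divides $M$.

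So fix a prime $\ell$ with $\ell\mid q-1$. Then $q\equiv1\pmod{\ell}$, and hence
\[
M=\frac{q^n-1}{q-1}=1+q+q^2+\cdots+q^{n-1}\equiv n\pmod{\ell}.
\]
The hypothesis $\rad(q-1)\mid n$ gives $\ell\mid n$, so $\ell\mid M$.

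Therefore every prime dividing $N$ divides $M$. Since $\rad(N)$ is square-free, this gives $\rad(N)\mid M$, that is, $D_1\mid M$. Theorem~\ref{thm:uniform} then shows that $\mathcal{N}_{1,k}$ is constant on $K^\times$ for each $0\le k\le n-1$.

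There is no real obstacle. The only point needing care is the reduction $M\equiv n\pmod{\ell}$ for primes $\ell\mid q-1$, and this is the elementary congruence above.
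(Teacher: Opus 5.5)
Your proposal is correct and follows essentially the same route as the paper: you set $r=1$ so that $D_1=\rad(N)$, use $M\equiv n\pmod{\ell}$ for primes $\ell\mid q-1$ to get $\rad(N)\mid M$, and then apply Theorem~\ref{thm:uniform}. The only difference is that the paper proves the full equivalence $D_1\mid M\iff\rad(q-1)\mid n$, whereas you prove just the direction that is needed.
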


\begin{proof}
Since \(D_1=\rad(N)\), we have \(D_1\mid M \iff \rad(N)\mid M\). Let \(\ell\mid N\) be prime. If \(\ell\nmid q-1\), then
\(N=(q-1)M\) implies \(\ell\mid M\). If \(\ell\mid q-1\),
then
$
M=1+q+\cdots+q^{n-1}\equiv n\pmod{\ell},
$
so \(\ell\mid M\) if and only if \(\ell\mid n\).
Since every prime divisor of \(q-1\) also divides \(N\),
we obtain
$
D_1\mid M
\Longleftrightarrow
\rad(q-1)\mid n.
$
The assertion follows from Theorem~\ref{thm:uniform}
with \(r=1\).
\end{proof}

\noindent Corollary \ref{cor:primitive} below shows that, conversely, $H_1=K^\times$ only if $\rad(q-1)\mid n$.

We now prove Proposition \ref{prp}.

\begin{proof}[Proof of Proposition \ref{prp}]
Let $\sigma\colon L\to L$, $\sigma(\alpha)=\alpha^p$, be the
Frobenius automorphism of $L$. We first show that $\sigma$ preserves
$r$-primitivity and $k$-normality, and raises the trace to the $p$-th
power.
Since $p\nmid N$, raising an element of $L^\times$ to the
$p$-th power preserves its multiplicative order. Hence
$\sigma$ preserves $r$-primitivity.
Next, $\sigma$ is an additive bijection of $L$ which commutes with
the $q$-power map and satisfies $\sigma(c\beta)=c^p\sigma(\beta)$ for
$c\in K$ and $\beta\in L$. Hence $\sigma$ maps the $K$-subspace
$U_\alpha$ of Section \ref{sec:prelim} bijectively onto
$U_{\alpha^p}$, so that
$
\dim_K U_{\alpha^p}=\dim_K U_\alpha.
$
By Lemma \ref{prop:knormorder}, an element $\alpha \in L$ is $k$-normal precisely
when $\dim_K U_\alpha=n-k$. Thus $\sigma$ also preserves
$k$-normality.
Moreover,
$
    \operatorname{Tr}_{L/K}(\alpha^p)
    =\sum_{i=0}^{n-1}\alpha^{pq^i}
    =\left(\sum_{i=0}^{n-1}\alpha^{q^i}\right)^p
     =\operatorname{Tr}_{L/K}(\alpha)^p.$

\noindent (i) For $a\in K$, let
\[
    \mathcal A_a=
    \{\alpha\in L^\times:
      \alpha\text{ is $r$-primitive and $k$-normal over }K,
      \ \operatorname{Tr}_{L/K}(\alpha)=a\},
\]
so that $|\mathcal A_a|=\mathcal{N}_{r,k}(a)$.
It follows that $\sigma$ maps $\mathcal A_a$ into $\mathcal A_{a^p}$.
Its inverse $\sigma^{en-1}$ preserves the same properties
and sends trace $a^p$ to trace $a$, because $b\mapsto b^p$ is injective on $K$. Hence $\sigma$ restricts to
a bijection $\mathcal A_a\longrightarrow \mathcal A_{a^p}$, proving
$\mathcal{N}_{r,k}(a^p)=\mathcal{N}_{r,k}(a)$ for every $a\in K$.

\noindent (ii) Since $1^p=1$, the map $\sigma$ permutes $\mathcal A_1$, and by the uniformity hypothesis,
$
    |\mathcal A_1|=\mathcal{N}_{r,k}(1)=c_{r,k}.
$
If $N/r=1$, then $d=1$ and the conclusion is immediate.
Suppose that $N/r>1$. For $\alpha\in \mathcal A_1$, its orbit length under $\sigma$ is
the least positive integer $j$ such that $\alpha^{p^j}=\alpha$.
Since $\operatorname{ord}(\alpha)=N/r$, we have $    \alpha^{p^j}=\alpha
    \iff \alpha^{p^j-1}=1
    \iff \frac Nr\mid p^j-1.
$
By the definition of $d$, the least such $j$ is $d$.
Thus every orbit of $\sigma$ on $\mathcal A_1$ has length $d$.
These orbits partition $\mathcal A_1$, so
$
    d $ divides $|\mathcal A_1|=c_{r,k}$.
\end{proof}

The following example shows that the converse of Theorem \ref{thm:uniform} is not true.

\begin{example}\label{ex:converse-uniformity}
Let $q=5$, $n=4$, and $r=12$. Then 
$
    N=624,\; M=156,\;
    D_{12}=12\operatorname{rad}(52)=312.
$
Thus $D_{12}\nmid M$. Nevertheless, a direct computation using PARI/GP \cite{PARI} gives 
$
    \mathcal{N}_{12,0}(a)=4
    \text{ for every }a\in\mathbb F_5^\times,
$
so $\mathcal{N}_{12,0}$ is uniform although the hypothesis of Theorem \ref{thm:uniform} fails.
Moreover, $d=\operatorname{ord}_{52}(5)=4$, so the divisibility condition in
Proposition~\ref{prp} (ii) holds.

\noindent For the same values of $q,n,r$, the computation also gives
\[
    \mathcal{N}_{12,1}(1)=\mathcal{N}_{12,1}(4)=4
    \qquad\text{and}\qquad
    \mathcal{N}_{12,1}(2)=\mathcal{N}_{12,1}(3)=0.
\]
Thus $\mathcal{N}_{12,k}$ is constant on $\mathbb F_5^\times$
for $k=0$ but not for $k=1$. (See Remark \ref{46} for the
interpretation of these values in terms of the cosets of $H_{12}$.)
\end{example}

\section{Proof of Theorem \ref{thm:mult}}
\label{sec:Rr-mult-proof}{\label{sec5}}

Theorem \ref{thm:uniform} gives a sufficient condition, $D_r\mid M$, for
$\Ncount_{r,k}$ to be constant on all of $K^\times$, and Example \ref{ex:converse-uniformity} shows
that the condition is not necessary. This section describes what is true
without any hypothesis: $\Ncount_{r,k}$ is always constant on the cosets of
an explicit subgroup $H_r$ of $K^\times$, whose generator and index we
compute. The subgroup appears naturally once one knows exactly which
scalars $\lambda\in L^\times$ permute the set of $r$-primitive elements
(Lemma \ref{lem:multipliers}). 

\par\noindent
As in Section \ref{sec:uniform-character} we work with $(R,r)$-free elements, which contain the
$r$-primitive case ($R=N/r$). Throughout, let $r\mid N$ and $R\mid N/r$,
and fix a generator $g$ of $L^\times$, so that
$K^\times=\langle g^M\rangle$. Let
\[
D_{R,r}=r\rad(R),
\]
and let $\Sfree_{R,r}$ denote the set of $(R,r)$-free elements of
$L^\times$. Exponents of $g$ are read modulo $N$.
\par\noindent
Since $r\mid N$ and $D_{R,r}\mid N$ (because
$\rad(R)\mid R\mid N/r$), the conditions ``$r\mid b$'' and
``$D_{R,r}\mid c$'' on an exponent do not depend on the representative
chosen modulo $N$.
\par\noindent
We first describe $\Sfree_{R,r}$ in terms of exponents.

\begin{lemma}\label{lem:Rr-exponent}
$\Sfree_{R,r}=\{\,g^{rb'}:b'\in\Z,\ \gcd(b',R)=1\,\}$.
\end{lemma}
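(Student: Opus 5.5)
We prove the two inclusions directly from Definition \ref{definition rr}, working throughout with exponents of the fixed generator $g$. The first step is to identify $Z_r$. Since $L^\times=\langle g\rangle$ is cyclic of order $N$, its unique subgroup of order $N/r$ is $Z_r=\langle g^r\rangle=\{g^{rb'}:b'\in\Z\}$. Consequently $\alpha\in Z_r$ exactly when $\alpha=g^{rb'}$ for some $b'$, and $b'$ is determined modulo $N/r$. Because $R\mid N/r$, the condition $\gcd(b',R)=1$ does not depend on the choice of $b'$ modulo $N/r$.

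The second step is to recall the structure of $R$-free elements in a cyclic group. Let $G=\langle h\rangle$ be cyclic of order $m$ with $R\mid m$. We claim that $h^{b'}$ is $R$-free if and only if $\gcd(b',R)=1$.

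For the forward direction, suppose $\ell$ is a prime dividing $\gcd(b',R)$. Then $h^{b'}=(h^{b'/\ell})^\ell$ with $\ell\mid R$ and $\ell\neq1$, so $h^{b'}$ is not $R$-free.

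For the converse, suppose $\gcd(b',R)=1$ and $h^{b'}=\gamma^d$ with $d\mid R$. Write $\gamma=h^c$, so that $b'\equiv cd\pmod m$. Since $d\mid R\mid m$, reducing this congruence modulo $d$ gives $d\mid b'$. Together with $d\mid R$ and $\gcd(b',R)=1$, this forces $d=1$. Hence $h^{b'}$ is $R$-free.

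The final step applies this claim with $G=Z_r$, $h=g^r$ and $m=N/r$, which is legitimate since $R\mid N/r$. It yields $\Sfree_{R,r}=\{g^{rb'}:\gcd(b',R)=1\}$, which is the statement of the lemma.

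The only delicate point is that the $R$-freeness must be taken inside $Z_r$, not inside $L^\times$. This is exactly why we write $\gamma=h^c$ as a power of the generator $g^r$ of $Z_r$, and why the congruence is read modulo $m=N/r$ rather than modulo $N$. With that bookkeeping in place, the argument is routine.
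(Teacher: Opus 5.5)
Your proof is correct and takes essentially the paper's route: you identify $Z_r=\langle g^r\rangle$ and characterize $R$-freeness inside this cyclic group through the exponent $b'$. The only difference is minor: you handle an arbitrary $d\mid R$ directly via the congruence $b'\equiv cd\pmod{N/r}$, whereas the paper reduces to primes $\ell\mid R$ and uses that $(g^r)^{b'}$ is an $\ell$-th power in $Z_r$ if and only if $\ell\mid b'$.
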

\begin{proof}
The subgroup $Z_r=\langle g^r\rangle$ of Definition \ref{definition rr} has order $N/r$,
so $g^b\in Z_r$ if and only if $r\mid b$. Write $b=rb'$. In the cyclic
group $Z_r$ with generator $g^r$, the element $(g^r)^{b'}$ is an
$\ell$-th power, for a prime $\ell\mid N/r$, if and only if $\ell\mid b'$.
Every prime divisor of $R$ divides $N/r$. Hence $(g^r)^{b'}$ is $R$-free
in $Z_r$ if and only if $\gcd(b',R)=1$.
\end{proof}

\noindent Next we determine the scalars that permute $\Sfree_{R,r}$.

\begin{lemma}\label{lem:multipliers}
Let $c\in\Z$ and $\lambda=g^c$. Then
$\lambda\Sfree_{R,r}=\Sfree_{R,r}$ if and only if $D_{R,r}\mid c$.
\end{lemma}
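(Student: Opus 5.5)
By Lemma \ref{lem:Rr-exponent}, $\Sfree_{R,r}=\{g^{rb'}:\gcd(b',R)=1\}$. We work throughout with exponents modulo $N$, so both directions reduce to arithmetic of residues.

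For the ``if'' direction, suppose $D_{R,r}\mid c$ and write $c=rc'$ with $\rad(R)\mid c'$. Take an element $g^{rb'}\in\Sfree_{R,r}$. Then $\lambda g^{rb'}=g^{r(b'+c')}$. Every prime $\ell\mid R$ divides $c'$ and does not divide $b'$, so $\ell\nmid b'+c'$, and hence $\gcd(b'+c',R)=1$. This gives $\lambda\Sfree_{R,r}\subseteq\Sfree_{R,r}$. Since multiplication by $\lambda$ is injective and $\Sfree_{R,r}$ is finite, equality holds. One should also note that the membership criterion is well defined modulo $N$: replacing $b'$ by $b'+N/r$ does not change $\gcd(b',R)$, because $R\mid N/r$.

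For the ``only if'' direction, suppose $\lambda\Sfree_{R,r}=\Sfree_{R,r}$. The set is nonempty, since it contains $g^{r}$ (take $b'=1$). Hence $g^{r+c}\in\Sfree_{R,r}$, which forces $r\mid r+c$ modulo $N$; as $r\mid N$, this gives $r\mid c$. Write $c=rc'$, so the hypothesis says that $b'\mapsto b'+c'$ preserves the condition $\gcd(b',R)=1$ on residues modulo $N/r$. Now suppose some prime $\ell\mid R$ does not divide $c'$; we want a contradiction. Choose $b'$ with $\gcd(b',R)=1$ and $b'\equiv -c'\pmod{\ell}$. This is possible by CRT: impose $b'\equiv -c'\not\equiv 0\pmod\ell$, and $b'\equiv1$ modulo every other prime divisor of $R$. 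Then $g^{rb'}\in\Sfree_{R,r}$, but $\ell\mid b'+c'$, so $\lambda g^{rb'}\notin\Sfree_{R,r}$, a contradiction. Hence $\rad(R)\mid c'$, and therefore $D_{R,r}=r\rad(R)\mid rc'=c$.

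The main point of care is the bookkeeping in the ``only if'' step. Membership in $\Sfree_{R,r}$ depends only on $b'$ modulo $N/r$, and the CRT choice is made modulo $\rad(R)$. Since $\rad(R)\mid N/r$, this choice lifts consistently, so the argument does not depend on the representatives chosen.
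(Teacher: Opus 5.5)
Your proposal is correct and follows essentially the same route as the paper. The paper also uses the exponent description of Lemma~\ref{lem:Rr-exponent}; for the ``if'' direction it uses the shift $b'\mapsto b'+c'$ together with injectivity and finiteness; and for the converse it first gets $r\mid c$ from $g^{r}$ and then uses a CRT choice $b'\equiv -c'\pmod{\ell}$, $b'\equiv 1$ modulo the other primes of $R$. Your remarks on well-definedness modulo $N/r$ just make explicit what the paper covers by its convention that exponents are read modulo $N$.
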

\begin{proof}
By Lemma \ref{lem:Rr-exponent}, the question is whether $c+rb'$ is again of the form
$rb''$ with $\gcd(b'',R)=1$, for every $b'$ with $\gcd(b',R)=1$.
\par\noindent
Suppose first that $D_{R,r}\mid c$, and write $c=rc'$ with
$\rad(R)\mid c'$. Let $\gcd(b',R)=1$. For every prime $\ell\mid R$ we
have $\ell\mid c'$, so $b'+c'\equiv b'\not\equiv0\pmod\ell$. Thus
$\gcd(b'+c',R)=1$, and Lemma \ref{lem:Rr-exponent} gives
$\lambda g^{rb'}=g^{r(b'+c')}\in\Sfree_{R,r}$. Hence
$\lambda\Sfree_{R,r}\subseteq\Sfree_{R,r}$; since multiplication by
$\lambda$ is injective and $\Sfree_{R,r}$ is finite, equality holds.
\par\noindent
Conversely, suppose that $\lambda\Sfree_{R,r}=\Sfree_{R,r}$. Taking $b'=1$
in Lemma \ref{lem:Rr-exponent}, $g^r\in\Sfree_{R,r}$, so $g^{r+c}\in\Sfree_{R,r}$.
By the same lemma $r\mid r+c$, hence $r\mid c$; write $c=rc'$.
Applying the lemma to $g^{rb'}$ and its image $g^{r(b'+c')}$ gives
\begin{align}{\label{55}}
\gcd(b'+c',R)=1\qquad\text{whenever }\gcd(b',R)=1.
\end{align}
We claim that every prime $\ell\mid R$ divides $c'$.
If not, pick such an $\ell$ with $\ell\nmid c'$ and choose, by the
Chinese Remainder Theorem, an integer $b'$ with
\[
b'\equiv-c'\pmod\ell,\qquad
b'\equiv1\pmod{\ell'}\text{ for every prime }\ell'\mid R,\ \ell'\ne\ell.
\]
Then $\ell\nmid b'$ (as $\ell\nmid c'$) and no other prime divisor of $R$
divides $b'$, so $\gcd(b',R)=1$; but $\ell\mid b'+c'$, contradicting \eqref{55}. Hence $\rad(R)\mid c'$
 (this is trivially true when $R=1$), and
$D_{R,r}=r\rad(R)\mid rc'=c$.
\end{proof}

\noindent Recall from Section \ref{sec:uniform-character} that $\Ncount_{R,r,k}(a)$ is the
number of $(R,r)$-free $k$-normal elements of $L^\times$ with trace $a$.
We can now prove the coset statement in its general form.

\begin{theorem}\label{thm:general-multipliers}
Let $H_{R,r}=\{\,g^{jM}:j\in\Z,\ D_{R,r}\mid jM\,\}$. Then $H_{R,r}$
is a subgroup of $K^\times$, and
\[
\Ncount_{R,r,k}(\lambda a)=\Ncount_{R,r,k}(a)
\quad\text{for every }a\in K,\ \lambda\in H_{R,r}.
\]
In particular, $\Ncount_{R,r,k}$ is constant on each coset of $H_{R,r}$
in $K^\times$.
\end{theorem}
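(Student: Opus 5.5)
The proof will be a direct bijection argument, the same one used for part (ii) of Theorem \ref{thm:knormal}, with Lemma \ref{lem:multipliers} supplying the missing ingredient that multiplication by $\lambda$ preserves $(R,r)$-freeness.

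\emph{Step 1: $H_{R,r}$ is a subgroup.} Write $\lambda=g^{jM}$. The condition $D_{R,r}\mid jM$ depends only on $j$ modulo $q-1$: if $j\equiv j'\pmod{q-1}$, then $jM\equiv j'M\pmod N$, and the divisibility by $D_{R,r}$ is well defined modulo $N$ because $D_{R,r}\mid N$. So membership in $H_{R,r}$ is well defined. The set $\{j: D_{R,r}\mid jM\}$ is an additive subgroup of $\Z$, which contains $0$ and is closed under sums and negatives. Its image under $j\mapsto g^{jM}$ is therefore a subgroup of $K^\times=\langle g^M\rangle$. Equivalently, $H_{R,r}=K^\times\cap\{g^c: D_{R,r}\mid c\}$, the intersection of two subgroups of $L^\times$.

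\emph{Step 2: the bijection.} Fix $\lambda=g^{c}\in H_{R,r}$ with $c=jM$ and $D_{R,r}\mid c$, and consider the map $\alpha\mapsto\lambda\alpha$ on $L$. The required properties are as follows.
\begin{enumerate}
\item[(a)] It preserves $(R,r)$-freeness. By Lemma \ref{lem:multipliers}, $\lambda\Sfree_{R,r}=\Sfree_{R,r}$.
\item[(b)] It preserves $k$-normality. By Lemma \ref{lem:knormstable}, this holds because $\lambda\in K^\times$.
\item[(c)] It multiplies the trace by $\lambda$. This follows from the $K$-linearity of $\Tr$, again because $\lambda\in K$.
\end{enumerate}
Hence the map sends the set counted by $\Ncount_{R,r,k}(a)$ injectively into the set counted by $\Ncount_{R,r,k}(\lambda a)$. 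Applying the same argument to $\lambda^{-1}\in H_{R,r}$ gives the reverse inclusion, so the two counts are equal. This holds for all $a\in K$, including $a=0$, where both sides are trivially the same set.

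\emph{Step 3: cosets.} If $b=\lambda a$ with $\lambda\in H_{R,r}$, then Step 2 gives $\Ncount_{R,r,k}(b)=\Ncount_{R,r,k}(a)$, so $\Ncount_{R,r,k}$ is constant on each coset of $H_{R,r}$ in $K^\times$.

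The only point requiring any care is the well-definedness in Step 1, namely that the conditions are independent of the exponent representative. The paper has already remarked that these divisibility conditions are read modulo $N$, which settles it. With Lemma \ref{lem:multipliers} available, there is no real obstacle.
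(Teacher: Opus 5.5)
Your proposal is correct and follows essentially the same argument as the paper. The paper also gets the subgroup property from closure of $\{j : D_{R,r}\mid jM\}$ under subtraction, and uses the bijection $\alpha\mapsto\lambda\alpha$ justified by Lemma \ref{lem:multipliers}, Lemma \ref{lem:knormstable} and the $K$-linearity of the trace, with $\lambda^{-1}$ supplying the reverse map.
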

\begin{proof}
Since $K^\times=\langle g^M\rangle$, $H_{R,r}\subseteq K^\times$, and
$1=g^0\in H_{R,r}$. If $D_{R,r}\mid j_1M$ and $D_{R,r}\mid j_2M$,
then $D_{R,r}\mid(j_1-j_2)M$, so
$g^{j_1M}(g^{j_2M})^{-1}=g^{(j_1-j_2)M}\in H_{R,r}$.
Thus $H_{R,r}$ is a subgroup.
\par\noindent
Now fix $a\in K$ and $\lambda=g^{jM}\in H_{R,r}$, and consider the
bijection $\alpha\mapsto\lambda\alpha$ of $L^\times$.
It preserves the three properties that define the set counted by
$\Ncount_{R,r,k}(a)$, except that it moves the trace:
\begin{enumerate}
\item it maps $\Sfree_{R,r}$ onto itself, by Lemma \ref{lem:multipliers}, because $D_{R,r}\mid jM$;
\item it preserves $k$-normality, by Lemma \ref{lem:knormstable}, because $\lambda\in K^\times$;
\item it multiplies the trace by $\lambda$, because $\Tr$ is $K$-linear.
\end{enumerate}
Hence it maps the set counted by $\Ncount_{R,r,k}(a)$ into the set counted
by $\Ncount_{R,r,k}(\lambda a)$.
 Since $\lambda^{-1}\in H_{R,r}$, the map
$\alpha\mapsto\lambda^{-1}\alpha$ has the same three properties and maps
the second set into the first. The two sets are therefore in bijection.
\end{proof}

\begin{proof}[Proof of Theorem \ref{thm:mult}]
Take $R=N/r$. By Definition \ref{definition rr}, $\Sfree_{N/r,r}$ is the set of
$r$-primitive elements of $L$, so $\Ncount_{N/r,r,k}=\Ncount_{r,k}$.
Moreover $D_{N/r,r}=r\rad(N/r)=D_r$ and $H_{N/r,r}=H_r$.
The statement is now Theorem \ref{thm:general-multipliers}.
\end{proof}

\noindent The subgroup $H_r$ is defined by a divisibility condition
on exponents. We make it explicit.

\begin{proposition}\label{prop:index}
With the notation of Theorem \ref{thm:mult}, let $t=D_r/\gcd(D_r,M)$.
Then $t\mid q-1$,
\[
H_r=\langle g^{tM}\rangle,\qquad\text{and}\qquad [K^\times:H_r]=t.
\]
\end{proposition}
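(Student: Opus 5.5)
The plan is to translate the defining condition of $H_r$ into a congruence on $j$ modulo $q-1$, using the elementary fact that for integers $A,B$ one has $A\mid jB$ if and only if $\frac{A}{\gcd(A,B)}\mid j$. Apply this with $A=D_r$ and $B=M$. Then $D_r\mid jM$ holds exactly when $t\mid j$, where $t=D_r/\gcd(D_r,M)$. Hence
\[
H_r=\{g^{jM}: t\mid j\}=\{(g^{tM})^{i}: i\in\Z\}=\langle g^{tM}\rangle .
\]
This step requires no finiteness or representative issues: $j$ ranges over all integers, and the set $\{g^{jM}\}$ only depends on $j$ modulo $q-1$.

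Next, I will show $t\mid q-1$. Since $D_r\mid N=(q-1)M$ (as noted at the start of Section \ref{sec5}, $D_r=D_{N/r,r}$ divides $N$), the fact above with $j=q-1$ gives $t\mid q-1$. Concretely, $D_r/\gcd(D_r,M)$ divides $(q-1)\cdot M/\gcd(D_r,M)$, and it is coprime to $M/\gcd(D_r,M)$.

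Finally, I will compute the index. The element $g^M$ has order $q-1$ in $K^\times$, so $g^{tM}=(g^M)^t$ has order $(q-1)/\gcd(t,q-1)=(q-1)/t$, using $t\mid q-1$. Thus $|H_r|=(q-1)/t$, and therefore $[K^\times:H_r]=t$.

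The only point needing care is the divisibility $D_r\mid N$, which is what forces $t\mid q-1$. It follows from $\rad(N/r)\mid N/r$, and I expect no real obstacle here.
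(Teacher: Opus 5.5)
Your proof is correct and follows essentially the same route as the paper. Both reduce $D_r\mid jM$ to $t\mid j$ using $\gcd\bigl(t,M/\gcd(D_r,M)\bigr)=1$, take $j=q-1$ with $D_r\mid N=(q-1)M$ to get $t\mid q-1$, and read off the index from the order $(q-1)/t$ of $g^{tM}$. The only cosmetic difference is that you compute this order as $(q-1)/\gcd(t,q-1)$ from the fact that $g^M$ has order $q-1$, whereas the paper writes it as $N/(tM)$.
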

\begin{proof}
Let $\delta=\gcd(D_r,M)$, so $t=D_r/\delta$ and
$\gcd(t,M/\delta)=1$. For $j\in\Z$,
\[
D_r\mid jM\iff t\mid j\tfrac M\delta\iff t\mid j.
\]
Since $D_r\mid N=(q-1)M$, the case $j=q-1$ gives $t\mid q-1$.
By definition of $H_r$,
\[
H_r=\{g^{jM}:t\mid j\}=\{g^{utM}:u\in\Z\}=\langle g^{tM}\rangle.
\]
As $tM\mid N$, the element $g^{tM}$ has order $N/(tM)=(q-1)/t$, so
$[K^\times:H_r]=t$.
\end{proof}

\noindent When \(r=1\), the 
subgroup $H_1$ in Theorem \ref{thm:mult} takes a particularly explicit
form. 

\begin{corollary}\label{cor:primitive}
Under the hypotheses of Theorem \ref{thm:mult}, let $r=1$. Then
\[
t=\prod_{\substack{\ell\mid q-1\\\ell\nmid n}}\ell
=\frac{\rad(q-1)}{\gcd(\rad(q-1),n)},
\]
the product being over primes (and equal to $1$ if there is no such prime).
Moreover $H_1=\{\lambda^t:\lambda\in K^\times\}$ is the subgroup of
$t$-th powers in $K^\times$, and $H_1=K^\times$ if and only if
$\rad(q-1)\mid n$.
\end{corollary}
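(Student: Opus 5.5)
With $r=1$ we have $D_1=\rad(N)$, so by Proposition~\ref{prop:index} we get $t=\rad(N)/\gcd(\rad(N),M)$. Since $\rad(N)$ is square-free, $t$ is the product of the primes $\ell\mid N$ with $\ell\nmid M$. I will determine these primes exactly as in the proof of Corollary~\ref{cor:mult}. If a prime $\ell\mid N$ does not divide $q-1$, then $N=(q-1)M$ forces $\ell\mid M$. If $\ell\mid q-1$, then $q\equiv 1\pmod{\ell}$ gives $M=1+q+\cdots+q^{n-1}\equiv n\pmod{\ell}$, so $\ell\nmid M$ exactly when $\ell\nmid n$. Every prime dividing $q-1$ also divides $N$. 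Hence the primes dividing $t$ are exactly the primes $\ell\mid q-1$ with $\ell\nmid n$, which gives $t=\prod_{\ell\mid q-1,\ \ell\nmid n}\ell$. This product equals $\rad(q-1)/\gcd(\rad(q-1),n)$, because $\gcd(\rad(q-1),n)$ is the product of the primes dividing both $q-1$ and $n$.

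For the description of $H_1$, Proposition~\ref{prop:index} gives $H_1=\langle g^{tM}\rangle=\langle (g^M)^t\rangle$. Since $g^M$ generates the cyclic group $K^\times$, the subgroup generated by $(g^M)^t$ is exactly $\{\lambda^t:\lambda\in K^\times\}$: every $\lambda$ has the form $(g^M)^j$, so $\lambda^t=((g^M)^t)^j$, and the converse inclusion is immediate.

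Finally, $H_1=K^\times$ if and only if $[K^\times:H_1]=t=1$, again by Proposition~\ref{prop:index}. By the product formula, $t=1$ exactly when no prime divisor of $q-1$ fails to divide $n$, that is, when $\rad(q-1)\mid n$.

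The only step with any content is the congruence $M\equiv n\pmod{\ell}$, which is already established in the proof of Corollary~\ref{cor:mult}. The rest is bookkeeping with square-free radicals, so I expect no real obstacle.
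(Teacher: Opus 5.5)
Your proposal is correct and follows the paper's proof essentially step for step. You derive $t$ from Proposition~\ref{prop:index}, split the primes of $N$ according to whether they divide $q-1$ using $M\equiv n\pmod{\ell}$, identify $\langle g^{tM}\rangle$ with the $t$-th powers, and use the index $t$ to characterize $H_1=K^\times$.
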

\begin{proof}
Here $D_1=\rad(N)$, so by Proposition \ref{prop:index},
$t=\rad(N)/\gcd(\rad(N),M)$. Since $\rad(N)$ is square-free, $t$ is the
product of the primes $\ell\mid N$ with $\ell\nmid M$.
 We sort these primes according to whether they divide $q-1$.
If $\ell\nmid q-1$, then $N=(q-1)M$ forces $\ell\mid M$, so $\ell$ does
not occur. If $\ell\mid q-1$, then
$M=1+q+\cdots+q^{n-1}\equiv n\pmod\ell$, so $\ell\nmid M$ if and only
if $\ell\nmid n$.
As every prime divisor of $q-1$ divides $N$, this proves the product formula,
and the quotient formula is a restatement of it.
\par\noindent
Finally, $K^\times=\langle g^M\rangle$ gives
$H_1=\langle g^{tM}\rangle=\{\lambda^t:\lambda\in K^\times\}$,
a subgroup of index $t$ because $t\mid q-1$. Thus $H_1=K^\times$ if and
only if $t=1$, that is, if and only if every prime divisor of $q-1$ divides $n$.
\end{proof}

\begin{corollary}
If $4\mid N/r$, then $\Ncount_{r,k}(-a)=\Ncount_{r,k}(a)$ for every $a\in K$.
\end{corollary}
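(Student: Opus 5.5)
Our plan is to apply Theorem \ref{thm:mult} with $\lambda=-1$. It therefore suffices to show that $-1\in H_r$ whenever $4\mid N/r$. In characteristic $2$ we have $-1=1$, so the claim is trivial; in fact $N$ is then odd and the hypothesis cannot hold. So assume $p$ is odd. The element $-1$ of $K^\times$ is the unique element of order $2$ in $L^\times$, so $-1=g^{N/2}$. Since $N=(q-1)M$, we can write $N/2=jM$ with $j=(q-1)/2\in\Z$. By the definition of $H_r$, the task reduces to proving $D_r\mid jM=N/2$.

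For this divisibility we argue prime by prime. Fix a prime $\ell$, and recall that $D_r=r\,\rad(N/r)$, so $v_\ell(D_r)=v_\ell(r)+\min\{v_\ell(N/r),1\}$.

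If $\ell$ is odd, then $v_\ell(N/2)=v_\ell(N)=v_\ell(r)+v_\ell(N/r)\ge v_\ell(D_r)$.

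If $\ell=2$, then $v_2(N/2)=v_2(r)+v_2(N/r)-1$. Since $4\mid N/r$, we have $v_2(N/r)\ge2$, and hence $v_2(N/2)\ge v_2(r)+1=v_2(D_r)$.

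Together these give $D_r\mid N/2$, so $-1=g^{jM}\in H_r$. Theorem \ref{thm:mult} then yields $\Ncount_{r,k}(-a)=\Ncount_{r,k}(a)$ for every $a\in K$.

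No step here is a real obstacle. The only points needing care are writing $-1$ as $g^{jM}$ with an explicit $j$, as the definition of $H_r$ requires, and the $2$-adic valuation count, which is exactly where the hypothesis $4\mid N/r$ is used; the weaker condition $2\mid N/r$ would not suffice. An alternative route would be Proposition \ref{prop:index}, showing that $t$ divides $(q-1)/2$, but the direct valuation check seems simplest.
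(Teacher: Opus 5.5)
Your proposal is correct and follows essentially the same route as the paper: both show $-1=g^{N/2}=g^{((q-1)/2)M}\in H_r$ by proving $D_r\mid N/2$, and then apply Theorem~\ref{thm:mult} with $\lambda=-1$. The only difference is presentation: the paper gets the divisibility at once from $\rad(N/r)\mid (N/r)/2$, while you check it prime by prime via $\ell$-adic valuations.
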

\begin{proof}
It suffices to show $-1\in H_r$ and apply Theorem \ref{thm:mult} with
$\lambda=-1$. Let $R=N/r$. Since $4\mid R$, $q$ is odd and every prime
divisor of $R$ (including $2$) divides $R/2$, so $\rad(R)\mid R/2$ and
$D_r=r\rad(R)\mid N/2$. As $N=(q-1)M$ and $g$ has order $N$, we have
$-1=g^{N/2}=g^{((q-1)/2)M}$, an element of the form $g^{jM}$ with
$D_r\mid jM$. Hence $-1\in H_r$.
\end{proof}

\begin{example}
Let $q=13$, $n=4$ and $r=1$. Then
$N=13^4-1=28560=2^4\cdot3\cdot5\cdot7\cdot17$ and
$M=N/12=2380=2^2\cdot5\cdot7\cdot17$.
By Corollary \ref{cor:primitive}, $t=\rad(12)/\gcd(6,4)=3$, so $H_1$ is the subgroup of
cubes in $K^\times=\F_{13}^\times$:
\[
H_1=\{1,5,8,12\},\qquad
2H_1=\{2,3,10,11\},\qquad
4H_1=\{4,6,7,9\}.
\]
By Theorem \ref{thm:mult}, each $\Ncount_{1,k}$ ($0\leq k<4$) is constant on each of
these cosets. However, it does not determine whether these three constant
values are equal. 
A direct computation using PARI/GP \cite{PARI} gives $384,344,360$ for $\Ncount_{1,0}$ on $H_1,2H_1,4H_1$, and $112,120,88$ for $\Ncount_{1,1}$. So neither $\Ncount_{1,0}$ nor $\Ncount_{1,1}$ is constant on $K^\times$.


\end{example}

\begin{remark}{\label{46}}
For the parameters of Example \ref{ex:converse-uniformity} ($q=5$, $n=4$, $r=12$) we have $M=156$,
$D_{12}=312$ and $t=312/\gcd(312,156)=2$, so
$H_{12}=\langle g^{312}\rangle=\{1,4\}$ and the cosets are $\{1,4\}$
and $\{2,3\}$. The counts in that example show that the two constant
values coincide for $k=0$ but differ for $k=1$.
\end{remark}

\section{A character-sum criterion for uniformity}{\label{sec6}}

Theorem \ref{thm:mult} says that $\Ncount_{r,k}$ is constant on the cosets of
$H_r$, but not whether the values on different cosets agree. This section
gives an exact criterion. Expanding $\Ncount_{r,k}$ in characters of
$K^\times$, we show that $\Ncount_{r,k}$ is constant on $K^\times$ if and
only if finitely many explicit character sums $A_{r,k}(\xi)$ vanish
(Theorem \ref{thm:uniformity}), and that only the characters $\xi$ trivial on $H_r$ have to
be tested (Proposition \ref{prop:restrictions}). This recovers Theorem \ref{thm:uniform} and reduces the
remaining cases to the evaluation of Gauss-type sums.
\par\noindent
Throughout this section let $r\mid N$ and $0\leq k<n$. Recall $\ind_k$
(the characteristic function of the $k$-normal elements), the canonical
additive character $\tau$ of $K$, and the characteristic function
$\Gamma_{R,r}$ of \eqref{Rr_free}. Let $\chi_0$ and $\xi_0$ be the trivial characters
of $L^\times$ and $K^\times$. Every multiplicative character of
$L^\times$ or $K^\times$ is extended to the whole field by the value $0$
at $0$; in particular $\chi_0(0)=\xi_0(0)=0$. For a character $\chi$,
$\overline\chi$ denotes its complex conjugate, so
$\overline\chi(z)=\chi(z)^{-1}$ for $z\ne0$, and $\chi|_{K^\times}$
denotes the restriction of $\chi$ to $K^\times$.
\par\noindent
For $\xi\in\widehat{K^\times}$ and $\chi\in\widehat{L^\times}$ define
the Gauss sum
\begin{equation}\label{eq:gauss}
\Delta_K(\xi)=\sum_{y\in K^\times}\xi(y)\tau(y)
\end{equation}
and the weighted sum
\[
G_k(\chi)=\sum_{\gamma\in L^\times}\ind_k(\gamma)\chi(\gamma)
\tau\bigl(\Tr(\gamma)\bigr).
\]
For a statement $P$, let $\ind_P$ be $1$ if $P$ holds and $0$ otherwise.
\par\noindent
The sum $G_k(\chi)$ involves the additive character $\tau$ of the
trace. The following lemma shows what happens when $\tau$ is replaced by
a multiplicative character $\xi$ of the trace: the result is again
$G_k(\chi)$, up to a Gauss sum, and it is nonzero only when $\chi$
restricts to $\overline\xi$ on $K^\times$.

\begin{lemma}\label{lem:twist}
Let $\chi\in\widehat{L^\times}$ and let $\xi\in\widehat{K^\times}$
be nontrivial. Then
\[
\sum_{\alpha\in L^\times}\ind_k(\alpha)\chi(\alpha)
\xi\bigl(\Tr(\alpha)\bigr)
=\frac{(q-1)\,G_k(\chi)}{\Delta_K(\overline\xi)}
\ind_{\{\chi|_{K^\times}=\overline\xi\}}.
\]
\end{lemma}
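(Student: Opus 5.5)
The plan is to convert the multiplicative character $\xi$ of the trace into additive characters, using the standard Gauss sum inversion. Since $\xi$ is nontrivial and $\tau$ is nontrivial, for every $z\in K$ we have
\[
\xi(z)=\frac{1}{\Delta_K(\overline\xi)}\sum_{y\in K^\times}\overline\xi(y)\,\tau(yz).
\]
For $z\neq0$ this follows by substituting $y\mapsto yz^{-1}$ in $\Delta_K(\overline\xi)$. For $z=0$ the right side is $\sum_y\overline\xi(y)=0=\xi(0)$ by \eqref{ortho}; this case matters because $\Tr(\alpha)$ may vanish. We also need $\Delta_K(\overline\xi)\neq0$. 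This is the classical fact $|\Delta_K(\overline\xi)|=\sqrt q$ for nontrivial characters (see \cite{LN}), and I would simply cite it.

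Substituting this into the left-hand side and interchanging the sums gives
\[
\frac{1}{\Delta_K(\overline\xi)}\sum_{y\in K^\times}\overline\xi(y)\sum_{\alpha\in L^\times}\ind_k(\alpha)\chi(\alpha)\tau\bigl(y\Tr(\alpha)\bigr).
\]
In the inner sum I substitute $\alpha=y^{-1}\gamma$, exactly as in the proof of Theorem \ref{prop:Rr-stable-uniform}. Lemma \ref{lem:knormstable} gives $\ind_k(y^{-1}\gamma)=\ind_k(\gamma)$, and the $K$-linearity of the trace gives $y\Tr(y^{-1}\gamma)=\Tr(\gamma)$. The inner sum therefore becomes $\chi(y)^{-1}G_k(\chi)$.

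The whole expression thus equals
\[
\frac{G_k(\chi)}{\Delta_K(\overline\xi)}\sum_{y\in K^\times}\overline\xi(y)\,\overline{\chi}(y)
=\frac{G_k(\chi)}{\Delta_K(\overline\xi)}\sum_{y\in K^\times}\bigl(\xi\cdot\chi|_{K^\times}\bigr)^{-1}(y).
\]
By \eqref{ortho} applied to $K^\times$, the last sum is $q-1$ if $\chi|_{K^\times}=\overline\xi$ and $0$ otherwise, which is the stated formula.

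The only delicate points are the $z=0$ case of the inversion formula (handled by the nontriviality of $\xi$) and the nonvanishing of the Gauss sum. I expect no real obstacle beyond quoting $|\Delta_K(\overline\xi)|^2=q$.
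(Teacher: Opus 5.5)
Your proposal is correct and follows the paper's proof step for step: the Gauss-sum inversion $\xi(z)=\Delta_K(\overline\xi)^{-1}\sum_{y\in K^\times}\overline\xi(y)\tau(yz)$, including the $z=0$ case; the scaling substitution $\alpha=y^{-1}\gamma$ via Lemma~\ref{lem:knormstable} and $K$-linearity of the trace; and orthogonality on $K^\times$. The only difference is that you cite $|\Delta_K(\overline\xi)|^2=q$, whereas the paper proves it directly in its Step~1 by expanding the square and using orthogonality of $\tau$.
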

\begin{proof}
\textit{Step 1. We show that the Gauss sum
$\Delta_K(\overline\xi)$ is nonzero.} The reason is that we divide by it
twice: in the statement of the lemma, and in Step 2, where we solve
\eqref{eq:gauss} for $\xi(w)$. Note that $\overline\xi$ is nontrivial
because $\xi$ is. Writing $|\Delta_K(\overline\xi)|^2$ as a double sum
over $y,z\in K^\times$ and substituting $y=vz$ gives
\[
|\Delta_K(\overline\xi)|^2
=\sum_{v\in K^\times}\overline\xi(v)
\sum_{z\in K^\times}\tau\bigl((v-1)z\bigr)
=(q-1)-\sum_{\substack{v\in K^\times\\v\ne1}}\overline\xi(v)=q,
\]
because the inner sum is $q-1$ for $v=1$ and $-1$ for $v\ne1$
(orthogonality of $\tau$), and
$\sum_{v\in K^\times}\overline\xi(v)=0$ since $\xi$ is nontrivial.
\par\noindent
\textit{Step 2. We express the multiplicative character $\xi$
through the additive character $\tau$.} The reason is that additive
characters behave well under the scaling $\alpha\mapsto\lambda\alpha$
that we use in Step 3. For $w\in K^\times$, substituting $y=w\lambda$
in \eqref{eq:gauss} gives
$\Delta_K(\overline\xi)=\overline\xi(w)
\sum_{\lambda\in K^\times}\overline\xi(\lambda)\tau(w\lambda)$, that is,
\begin{equation}\label{eq:character-expansion}
\xi(w)=\frac{1}{\Delta_K(\overline\xi)}
\sum_{\lambda\in K^\times}\overline\xi(\lambda)\tau(w\lambda).
\end{equation}
For $w=0$ both sides vanish: the left by the extension convention,
the right because $\overline\xi$ is nontrivial. So
\eqref{eq:character-expansion} holds for all $w\in K$.
Applying it with $w=\Tr(\alpha)$ and interchanging the finite sums,
\[
\sum_{\alpha\in L^\times}\ind_k(\alpha)\chi(\alpha)
\xi\bigl(\Tr(\alpha)\bigr)
=\frac{1}{\Delta_K(\overline\xi)}
\sum_{\lambda\in K^\times}\overline\xi(\lambda)
\sum_{\alpha\in L^\times}\ind_k(\alpha)\chi(\alpha)
\tau\bigl(\lambda\Tr(\alpha)\bigr).
\]
\par\noindent
\textit{Step 3. We remove $\lambda$ from the inner sum by the change
of variables $\gamma=\lambda\alpha$.} This substitution is chosen because
multiplication by $\lambda\in K^\times$ preserves $k$-normality and is
compatible with the trace. Fix $\lambda\in K^\times$ and substitute
$\gamma=\lambda\alpha$, a bijection of $L^\times$. By Lemma \ref{lem:knormstable},
$\ind_k(\lambda^{-1}\gamma)=\ind_k(\gamma)$; by multiplicativity,
$\chi(\lambda^{-1}\gamma)=\overline\chi(\lambda)\chi(\gamma)$; by
$K$-linearity, $\lambda\Tr(\lambda^{-1}\gamma)=\Tr(\gamma)$.
Hence the inner sum equals $\overline\chi(\lambda)G_k(\chi)$, and
\begin{equation}\label{eq:remaining-sum}
\sum_{\alpha\in L^\times}\ind_k(\alpha)\chi(\alpha)
\xi\bigl(\Tr(\alpha)\bigr)
=\frac{G_k(\chi)}{\Delta_K(\overline\xi)}
\sum_{\lambda\in K^\times}\overline\xi(\lambda)\overline\chi(\lambda).
\end{equation}
\par\noindent
\textit{Step 4. We evaluate the remaining sum by orthogonality of
characters.} The sum in \eqref{eq:remaining-sum} is the sum over
$K^\times$ of the character
$\overline\xi\cdot\overline{\chi|_{K^\times}}$ of $K^\times$.
It equals $q-1$ if
$\overline\xi\cdot\overline{\chi|_{K^\times}}=\xi_0$, that is, if
$\chi|_{K^\times}=\overline\xi$, and $0$ otherwise.
Substituting this into \eqref{eq:remaining-sum} gives the lemma.
\end{proof}

\noindent
We now specialise the character expansion \eqref{Rr_free} to $R=N/r$, where
$\Gamma_{N/r,r}$ is the characteristic function of the $r$-primitive elements:
\begin{equation}\label{eq:rp-expansion}
\Gamma_{N/r,r}(\alpha)=\frac{\theta(N/r)}r
\sum_{l\mid N}\frac{\mu(l_{(r)})}{\varphi(l_{(r)})}
\sum_{\chi_l}\chi_l(\alpha),\qquad \alpha\in L^\times,
\end{equation}
the inner sum running over the characters $\chi_l$ of $L^\times$ of
order $l$. For each nontrivial $\xi\in\widehat{K^\times}$ let
\begin{equation}\label{eq:A}
A_{r,k}(\xi):=\frac{\theta(N/r)}r
\sum_{l\mid N}\frac{\mu(l_{(r)})}{\varphi(l_{(r)})}
\sum_{\substack{\chi_l\\\chi_l|_{K^\times}=\overline\xi}}G_k(\chi_l).
\end{equation}
Thus $A_{r,k}(\xi)$ is obtained from \eqref{eq:rp-expansion} by
keeping only the characters that restrict to $\overline\xi$ and replacing
each of them by its weighted sum $G_k$.

\begin{theorem}\label{thm:uniformity}
$\Ncount_{r,k}$ is constant on $K^\times$ if and only if $A_{r,k}(\xi)=0$
for every nontrivial $\xi\in\widehat{K^\times}$.
\end{theorem}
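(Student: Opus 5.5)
The plan is to expand the function $a\mapsto\Ncount_{r,k}(a)$ on $K^\times$ in multiplicative characters of $K^\times$, and to identify its nontrivial Fourier coefficients with nonzero multiples of $A_{r,k}(\xi)$. A function $f$ on the finite abelian group $K^\times$ is constant if and only if $\sum_{a\in K^\times}f(a)\xi(a)=0$ for every nontrivial $\xi\in\widehat{K^\times}$. This follows from Fourier inversion $f(a)=\frac1{q-1}\sum_{\xi}\widehat f(\xi)\overline\xi(a)$ together with the linear independence of characters. So it suffices to compute
\[
C(\xi):=\sum_{a\in K^\times}\Ncount_{r,k}(a)\xi(a)
\]
for each nontrivial $\xi$ and show that $C(\xi)$ is a nonzero constant times $A_{r,k}(\xi)$.

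First, I rewrite $C(\xi)$ as a sum over elements. Since $\xi(0)=0$ by the extension convention, we have
\[
C(\xi)=\sum_{\alpha\in L^\times}\Gamma_{N/r,r}(\alpha)\ind_k(\alpha)\,\xi\bigl(\Tr(\alpha)\bigr),
\]
because every $\alpha$ counted by $\Ncount_{r,k}(a)$ has trace $a$. Next, I insert the expansion \eqref{eq:rp-expansion} for $\Gamma_{N/r,r}$ and interchange the finite sums. This gives
\[
C(\xi)=\frac{\theta(N/r)}{r}\sum_{l\mid N}\frac{\mu(l_{(r)})}{\varphi(l_{(r)})}\sum_{\chi_l}\sum_{\alpha\in L^\times}\ind_k(\alpha)\chi_l(\alpha)\,\xi(\Tr(\alpha)).
\]
I then apply Lemma \ref{lem:twist} to each innermost sum. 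This kills every $\chi_l$ whose restriction to $K^\times$ is not $\overline\xi$, and replaces each surviving term by $(q-1)G_k(\chi_l)/\Delta_K(\overline\xi)$. Comparing with \eqref{eq:A}, the result is
\[
C(\xi)=\frac{q-1}{\Delta_K(\overline\xi)}\,A_{r,k}(\xi).
\]

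Finally, Step 1 of the proof of Lemma \ref{lem:twist} gives $|\Delta_K(\overline\xi)|^2=q\neq0$, so the factor $(q-1)/\Delta_K(\overline\xi)$ is a well-defined nonzero number. Hence $C(\xi)=0$ if and only if $A_{r,k}(\xi)=0$. Combined with the Fourier criterion above, this proves both directions of the theorem.

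There is no serious obstacle, since Lemma \ref{lem:twist} does the real work. The only points needing care are the following. The trace-zero elements must drop out of $C(\xi)$; this holds because $\xi(0)=0$, which is why $a=0$ plays no role. The Fourier criterion must also be stated cleanly: if $f=c$ is constant, then $C(\xi)=c\sum_a\xi(a)=0$ by \eqref{ortho}. Conversely, if all nontrivial coefficients vanish, inversion leaves only the trivial-character term, so $f$ is the constant $\frac{1}{q-1}\sum_a f(a)$.
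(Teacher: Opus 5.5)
Your proposal is correct and follows essentially the same route as the paper. Your $C(\xi)$ is the paper's $T_{r,k}(\xi)$; the paper likewise inserts the $r$-primitive character expansion, applies Lemma~\ref{lem:twist} to get $T_{r,k}(\xi)=\frac{q-1}{\Delta_K(\overline\xi)}A_{r,k}(\xi)$, and concludes by Fourier inversion on $K^\times$ together with the nonvanishing of the Gauss sum $\Delta_K(\overline\xi)$.
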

\begin{proof}
The idea is to expand $\Ncount_{r,k}$ on $K^\times$ in multiplicative
characters; constancy means that all nontrivial Fourier coefficients vanish,
and Lemma \ref{lem:twist} identifies these coefficients with the sums $A_{r,k}(\xi)$.
\par\noindent
For $\xi\in\widehat{K^\times}$ define
\[
T_{r,k}(\xi)=\sum_{\substack{\alpha\in L^\times\\
\alpha\text{ $r$-primitive and $k$-normal}}}\xi\bigl(\Tr(\alpha)\bigr).
\]
Elements of trace $0$ contribute nothing, since $\xi(0)=0$ for every
$\xi$, including $\xi_0$. Grouping the others by their trace,
\[
T_{r,k}(\xi)=\sum_{a\in K^\times}\xi(a)\,\Ncount_{r,k}(a),
\]
so that
\[
\Ncount_{r,k}(a)=\frac1{q-1}
\sum_{\xi\in\widehat{K^\times}}\overline\xi(a)\,T_{r,k}(\xi)
\quad(a\in K^\times)
\]
by Fourier inversion on $K^\times$. Hence $\Ncount_{r,k}$ is constant on
$K^\times$ if and only if $T_{r,k}(\xi)=0$ for all nontrivial $\xi$
(and then the constant value is $T_{r,k}(\xi_0)/(q-1)$).
\par\noindent
It remains to compute $T_{r,k}(\xi)$ for $\xi\ne\xi_0$.
Inserting \eqref{eq:rp-expansion},
\begin{equation}\label{eq:T-expansion}
\begin{aligned}
T_{r,k}(\xi)
&=\sum_{\alpha\in L^\times}\Gamma_{N/r,r}(\alpha)\ind_k(\alpha)
\xi\bigl(\Tr(\alpha)\bigr)\\
&=\frac{\theta(N/r)}r
\sum_{l\mid N}\frac{\mu(l_{(r)})}{\varphi(l_{(r)})}
\sum_{\chi_l}\sum_{\alpha\in L^\times}\ind_k(\alpha)\chi_l(\alpha)
\xi\bigl(\Tr(\alpha)\bigr).
\end{aligned}
\end{equation}
By Lemma \ref{lem:twist}, the innermost sum is $0$ unless
$\chi_l|_{K^\times}=\overline\xi$, in which case it equals
$(q-1)G_k(\chi_l)/\Delta_K(\overline\xi)$.
Comparing with \eqref{eq:A},
\[
T_{r,k}(\xi)=\frac{q-1}{\Delta_K(\overline\xi)}A_{r,k}(\xi).
\]
Since $\Delta_K(\overline\xi)\ne0$, $T_{r,k}(\xi)=0$ if and only if
$A_{r,k}(\xi)=0$, which proves the theorem.
\end{proof}

\noindent Theorem \ref{thm:uniformity} involves one sum for each nontrivial $\xi$.
We now show that most of them vanish for a trivial reason, namely that no
character with nonzero coefficient in \eqref{eq:rp-expansion} restricts to
$\overline\xi$.
\par\noindent
Let $X_r$ be the set of characters of $L^\times$ that occur in
\eqref{eq:rp-expansion} with nonzero coefficient:
\[
X_r:=\{\chi\in\widehat{L^\times}:\mu(l_{(r)})\ne0,
\text{ where }l=\ord(\chi)\}.
\]
For $l\mid N$, \eqref{rf} (with $R=N/r$) gives
$\mu(l_{(r)})\ne0\Rightarrow l\mid D_r$; conversely, if
$l\mid D_r=r\rad(N/r)$ then $l_{(r)}=l/\gcd(l,r)$ divides
$\rad(N/r)$ and is square-free. Hence
\[
X_r=\{\chi\in\widehat{L^\times}:\ord(\chi)\mid D_r\},
\]
the unique subgroup of the cyclic group $\widehat{L^\times}$ of order
$D_r$. The next proposition relates the restrictions of these
characters to the subgroup $H_r$ of Section \ref{sec5}.

\begin{proposition}\label{prop:restrictions}
Let $\res:\widehat{L^\times}\to\widehat{K^\times}$,
$\res(\chi)=\chi|_{K^\times}$, and let
\[
H_r^\perp=\{\xi\in\widehat{K^\times}:\xi(\lambda)=1
\text{ for every }\lambda\in H_r\}.
\]
Then $\res(X_r)=H_r^\perp$. Consequently:
\begin{enumerate}
\item if $\xi$ is nontrivial on $H_r$, then every coefficient in
$A_{r,k}(\xi)$ is zero, so $A_{r,k}(\xi)=0$;
\item if $D_r\mid M$, then $A_{r,k}(\xi)=0$ for every nontrivial
$\xi\in\widehat{K^\times}$;
\item if $D_r\nmid M$, then for every
$\xi\in H_r^\perp\setminus\{\xi_0\}$ the sum $A_{r,k}(\xi)$ has at
least one nonzero coefficient, and $\Ncount_{r,k}$ is constant on
$K^\times$ if and only if $A_{r,k}(\xi)=0$ for all these $\xi$.
\end{enumerate}
\end{proposition}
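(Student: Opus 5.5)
The plan is to prove $\res(X_r)=H_r^\perp$ by working in the cyclic groups, with $g$ the fixed generator of $L^\times$. Let $\omega$ be the generator of $\widehat{L^\times}$ with $\omega(g)=e^{2\pi i/N}$. By the description just obtained, $X_r=\langle \omega^{N/D_r}\rangle$, the subgroup of characters of order dividing $D_r$. Since $K^\times=\langle g^M\rangle$ is cyclic of order $q-1$, a character of $K^\times$ is determined by its value at $g^M$. The restriction $\res(\omega^{N/D_r})$ sends $g^M$ to $e^{2\pi i M/D_r}$, whose order is
\[
\frac{D_r}{\gcd(D_r,M)}=t ,
\]
with $t$ as in Proposition \ref{prop:index}. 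Hence $\res(X_r)$ is the unique subgroup of order $t$ of the cyclic group $\widehat{K^\times}$, using that $\res$ is a homomorphism and that the image of a cyclic group is generated by the image of a generator.

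On the other side, Proposition \ref{prop:index} gives $[K^\times:H_r]=t$. The annihilator $H_r^\perp$ is isomorphic to the character group of $K^\times/H_r$, so it has order $t$. Since $\widehat{K^\times}$ is cyclic, it has only one subgroup of order $t$, and therefore $\res(X_r)=H_r^\perp$. As a check, one can also verify the containment $\res(X_r)\subseteq H_r^\perp$ directly: if $\ord(\chi)\mid D_r$ and $D_r\mid jM$, then $\chi(g^{jM})=1$. Combined with the equality of orders, this gives the result again.

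The consequences then follow quickly. \emph{(1)} If $\xi$ is nontrivial on $H_r$, then $\xi\notin H_r^\perp$, and so $\overline\xi\notin H_r^\perp$ either, because $H_r^\perp$ is a group. So no $\chi\in X_r$ satisfies $\chi|_{K^\times}=\overline\xi$. Every character appearing in \eqref{eq:A} with $\chi_l|_{K^\times}=\overline\xi$ therefore lies outside $X_r$, so its coefficient $\mu(l_{(r)})$ vanishes, and $A_{r,k}(\xi)=0$. \emph{(2)} If $D_r\mid M$, then $t=1$, so $H_r^\perp=\{\xi_0\}$ and every nontrivial $\xi$ falls under (1). \emph{(3)} If $D_r\nmid M$, then $t>1$. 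For $\xi\in H_r^\perp\setminus\{\xi_0\}$ we also have $\overline\xi\in H_r^\perp=\res(X_r)$, so some $\chi\in X_r$ restricts to $\overline\xi$; this $\chi$ has a nonzero coefficient $\frac{\theta(N/r)}{r}\frac{\mu(l_{(r)})}{\varphi(l_{(r)})}$. The final equivalence combines Theorem \ref{thm:uniformity} with (1).

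The only delicate step is the order computation for $\res(\omega^{N/D_r})$, that is, checking that $e^{2\pi iM/D_r}$ has order exactly $D_r/\gcd(D_r,M)$ and matching this with $t$. Everything else is formal cyclic-group bookkeeping.
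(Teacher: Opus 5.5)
Your proof is correct and follows essentially the paper's route: both show $|\res(X_r)|=t=|H_r^\perp|$ and then deduce (1)--(3) from $\res(X_r)=H_r^\perp$ together with Theorem \ref{thm:uniformity}. The differences are cosmetic. You compute $|\res(X_r)|$ from the image of the generator $\omega^{N/D_r}$, whereas the paper uses the kernel of $\res$ (the subgroup of order $M$, which meets $X_r$ in $\gcd(D_r,M)$ elements). You conclude equality by uniqueness of the subgroup of order $t$ in the cyclic group $\widehat{K^\times}$, whereas the paper uses the direct inclusion $\res(X_r)\subseteq H_r^\perp$, which you also verify.
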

\begin{proof}
We first show $\res(X_r)\subseteq H_r^\perp$ and then compare
orders. Let $\chi\in X_r$ and $\lambda=g^{jM}\in H_r$. Then
$\ord(\chi)\mid D_r\mid jM$, so
$\chi(\lambda)=\chi(g)^{jM}=1$. Thus $\res(\chi)\in H_r^\perp$.
\par\noindent
For the orders, let
$Y=\ker(\res)=\{\chi\in\widehat{L^\times}:\chi|_{K^\times}=\xi_0\}$.
Since $K^\times=\langle g^M\rangle$, $\chi\in Y$ if and only if
$\chi(g)^M=1$, i.e.\ $\ord(\chi)\mid M$; so $Y$ is the subgroup of
order $M$ of $\widehat{L^\times}$, and $|X_r\cap Y|=\gcd(D_r,M)$.
Hence
\[
|\res(X_r)|=\frac{|X_r|}{|X_r\cap Y|}
=\frac{D_r}{\gcd(D_r,M)}=t.
\]
On the other hand $H_r^\perp$ is the character group of $K^\times/H_r$,
so $|H_r^\perp|=[K^\times:H_r]=t$ by Proposition \ref{prop:index}. The inclusion
is therefore an equality.
\par\noindent
\textit{(1)} If $\xi$ is nontrivial on $H_r$, so is $\overline\xi$; thus
$\overline\xi\notin H_r^\perp=\res(X_r)$, and no $\chi\in X_r$
satisfies $\chi|_{K^\times}=\overline\xi$. Every term of \eqref{eq:A}
therefore has coefficient $\mu(l_{(r)})=0$.
\par\noindent
\textit{(2)} If $D_r\mid M$ then $t=1$, so $H_r^\perp=\{\xi_0\}$ and (i)
applies to every nontrivial $\xi$.
\par\noindent
\textit{(3)} If $D_r\nmid M$ then $t>1$ and
$H_r^\perp\setminus\{\xi_0\}\ne\varnothing$.
For $\xi$ in this set, $\overline\xi\in H_r^\perp=\res(X_r)$, so some
$\chi\in X_r$ restricts to $\overline\xi$ and occurs in $A_{r,k}(\xi)$
with nonzero coefficient. Since the sums for $\xi\notin H_r^\perp$
vanish by (i), Theorem \ref{thm:uniformity} reduces to the vanishing of $A_{r,k}(\xi)$
for $\xi\in H_r^\perp\setminus\{\xi_0\}$.
\end{proof}

\noindent Part (ii) is Theorem \ref{thm:uniform}, seen from the character
side. Part (iii) explains why the converse fails.

\begin{remark}
The condition $D_r\nmid M$ does not imply that $\Ncount_{r,k}$ is
nonconstant: a nonzero coefficient in $A_{r,k}(\xi)$ does not force a
nonzero contribution, because $G_k(\chi)$ may vanish, and nonzero
contributions may cancel in the weighted sum.
Example \ref{ex:converse-uniformity} with Remark \ref{46} illustrates this. There
$H_{12}=\{1,4\}\subset\F_5^\times$, so
$H_{12}^\perp=\{\xi_0,\eta\}$ with $\eta$ the quadratic character, and
by Proposition \ref{prop:restrictions} only $A_{12,k}(\eta)$ has to be examined.
From the counts in Example \ref{ex:converse-uniformity},
\[
\begin{aligned}
T_{12,0}(\eta)&=\sum_{a\in\F_5^\times}\eta(a)\Ncount_{12,0}(a)
=4(1+1-1-1)=0,\\
T_{12,1}(\eta)&=\sum_{a\in\F_5^\times}\eta(a)\Ncount_{12,1}(a)
=4+4=8.
\end{aligned}
\]
By the identity
$T_{12,k}(\eta)=\dfrac4{\Delta_K(\eta)}A_{12,k}(\eta)$ from the proof
of Theorem \ref{thm:uniformity} (note $\overline\eta=\eta$), we get
$A_{12,0}(\eta)=0$ and $A_{12,1}(\eta)=2\Delta_K(\eta)\ne0$.
So, although $D_{12}\nmid M$, $\Ncount_{12,0}$ is constant on
$\F_5^\times$ while $\Ncount_{12,1}$ is not, in agreement with Example \ref{ex:converse-uniformity}.
\end{remark}


\section*{Declarations}

 \subsection*{Acknowledgements} The authors acknowledge the institutional support provided by the Indian Institute of Science Education and Research,  Pune.

\subsection*{Competing interests} The authors declare no competing interests.

\end{document}